\newtheorem{theorem}{Theorem}[section]
\newtheorem{lemma}[theorem]{Lemma}
\newtheorem{emphasise}[theorem]{ }
\newtheorem {question}[theorem]{Question}
\theoremstyle{definition}
\newtheorem{definition}[theorem]{Definition}
\theoremstyle{remark}
\newtheorem{remark}[theorem]{Remark}
\newtheorem{Ramsey's theorem}[theorem]{Ramsey's theorem}
\def\c1{{\cdot 1}}
\def\be{\begin{equation}}
\def\ee{\end{equation}}
\def\bea{\begin{eqnarray}}
\def\eea{\end{eqnarray}}
\def\bd{\begin{displaymath}}
\def\ed{\end{displaymath}}
\def\bda{\begin{eqnarray*}}
\def\eda{\end{eqnarray*}}
\def\bsm{\begin{small}}
\def\esm{\end{small}}
\title{Generalized low solution
of $\mathsf{RT}_k^1$ problem }
\date{}
\author{Lu Liu }
\email{g.jiayi.liu@gmail.com}
\address{Department of Mathematics and Statistics, Central South University,
Changsha, 410083, China}
\subjclass[2010]{Primary  68Q30 ; Secondary 03D32 03D80 28A78}
\thanks{}
\keywords{recursion theory, computability theory,
reverse mathematics,
$\mathsf{RT}_k^1$ problem,
Mathias forcing}
\begin{document}

\begin{abstract}
We study the "coding power" of an arbitrary
$\mathsf{RT}_k^1$-instance. We prove that every
$\mathsf{RT}_k^1$-instance admit non trivial
generalized low
solution. This is somewhat related to a problem proposed
by Patey. We also answer a question proposed by
Liu, i.e., we prove that there exists a
$\mathbf{0}'$-computable
$\mathsf{RT}_3^1$-instance, $I_3^1$, such that every
$\mathsf{RT}_2^1$-instance admit a non trivial
solution that does not compute any non trivial solution
of $I_3^1$.
\end{abstract}
\maketitle

\section{Introduction}

Reverse mathematics is a field that studies
the proof theoretic strength of mathematical
theorems.
Many theorems are surprisingly found
to be equivalent to one of the big five
axioms \cite{simpson1999subsystems}. Ramsey's theorem
for pairs is probably
one of the most famous exception.

Ramsey's theorem for single
integers, $\mathsf{RT}_k^1$, is not
interesting in the sense of
reverse mathematics. Because over $\mathsf{RCA}_0$
$\mathsf{RT}_k^1$ is trivial.
However, the "coding power" of an \emph{arbitrary}
instance of $\mathsf{RT}_k^1$ attracts more
and more attention since many admitting-homogenous-set
theorems induced by binary relations reduce to
the study of  $\mathsf{RT}_k^1$
instance. For example, $\mathsf{RT}_2^2, \mathsf{EM}$.
Here $\mathsf{EM}$ is the Erd\H{o}s-Moser theorem which says
that every infinite tournament contains
an infinite transitive subtournament
(see also \cite{lerman2013separating}).

In this paper, we prove two theorems about
$\mathsf{RT}_k^1$ instance. The first says
that every instance of $\mathsf{RT}_k^1$
admit generalized low solution. The second
theorem prove the existence of a
Muchnick degree of the solutions
of an instance of
$\mathsf{RT}_3^1$ that can not be
reduced to that of
any instance of $\mathsf{RT}_2^1$.
These results are of technical interest
and are related to some recent arising
questions in reverse mathematics
as illustrated in subsection \ref{RT21subsecmain}.
We begin by introducing
definitions of $\mathsf{RT}_k^n$ and
review of the related literature.

\begin{definition}
Let $[X]^k$ denote $\{F\subseteq X: |F|=k\}$.
A k-coloring on $X$
is a function $c: [X]^n\rightarrow \{1,2\ldots k\}$.
A
set $H\subseteq X$ is homogeneous for $c$
iff $c$ is constant on $[H]^k$.
A stable $2-$coloring is a function
$c: [X]^2\rightarrow \{1,2\}$
such that there exists $i\in\{1,2\}$
$|\{x\in X: c(x)\ne i\}|<\infty$.
\end{definition}

\begin{definition}[Ramsey's theorem \cite{ramsey1930problem}]
 ($\mathsf{RT}_k^n$) For any $n,k$, every k-coloring of $[\mathbb{N}]^n$
 admits an infinite homogeneous set.

 The stable Ramsey's theorem
 for pair, $\mathsf{SRT}_k^2$, is $\mathsf{RT}_2^2$
 restricted to stable colorings.
\end{definition}

 \begin{definition}[$\mathsf{COH}$]
Let $S_0,S_1,\cdots$ be a sequence of
sets. A set $C$
 is cohesive wrt $S_0,S_1,\cdots$ iff
 $(\forall i\in\omega) |C\cap S_i|<\infty\vee
 |C\cap \overline{S_i}|<\infty  $.

$(\mathsf{COH})$: For every
uniform sequence $S_0,S_1,\cdots$
there exists an infinite cohesive
set.

 \end{definition}
 For more
details see also \cite{Cholak2001} or
\cite{hirschfeldt2014slicing}.
There is a lot of literature
on Ramsey theorems for pairs.
To mention a few,
Simpson \cite{simpson1999subsystems}
and Jockusch \cite{jockusch1972ramsey}
proved that over
$\mathsf{RCA}_0$,
 $\mathsf{RT}_k^n$ is equivalent to
 $\mathsf{ACA}_0$ for $n>2$.
 Jockusch \cite{jockusch1972ramsey}
 also showed that $\mathsf{WKL}_0$ does
 not imply $\mathsf{RT}_2^2$.
 Seetapun and Slaman
 in their celebrated paper
 \cite{seetapun1995strength}
   proved that
 $\mathsf{RT}_2^2$ does not imply
 $\mathsf{ACA}_0$ over $\mathsf{RCA}_0$.
 Cholak, Jockusch and Slaman \cite{Cholak2001}
 proved that $\mathsf{RT}_2^2$ is equivalent
 to $\mathsf{SRT}_2^2+\mathsf{COH}$
 over $\mathsf{RCA}_0$. Their paper also
 create one of the most important technique
 based on Mathias forcing,
 the $low_2$ construction.
 Liu \cite{liu2012rt},\cite{liu2015cone}
 separate $\mathsf{RT}_2^2$ from
 $\mathsf{WKL}_0$ and $\mathsf{WWKL}$
 respectively. Therefore, combining
 with Jockusch  \cite{jockusch1972ramsey}
 and the fact $\mathsf{RT}_2^2$ is not
 equivalent to $\mathsf{RCA}_0$,
 $\mathsf{RT}_2^2$ is not equivalent to
 any of the "big five". Most recently,
 Chong, Slaman and Yang \cite{chong2014metamathematics}
 proved that $\mathsf{SRT}_2^2$
 does not imply $\mathsf{COH}$ and thus
  $\mathsf{RT}_2^2$. This settles a long standing problem.
  However, their model is nonstandard and thus leave
  the question that whether $\mathsf{SRT}_2^2$
  imply $\mathsf{COH}$ in standard arithmetic model.
 Another important progress is Patey and
 Yokoyama \cite{patey2016proof},
 they proved that over $\mathsf{RCA}_0$
 $\mathsf{RT}_2^2+\mathsf{WKL}_0$ is
 $\tilde{\Pi}_3^0$ conservative. There they
invent the notion of $\alpha-$large
 etc that have prospective applications
 to other problems.

\subsection{Main results}
\label{RT21subsecmain}

In this subsection we introduce
 our main results and
 the relation
about our main results with the recent
progress in reverse mathematics.

Patey \cite{patey2015open} section 2
proposed several questions concerning
computational complexity of
solutions to $\Delta_2^0$ instance of
$\mathsf{RT}_2^1$.
For example, "whether any $\Delta_2^0$
$\mathsf{RT}_2^1$ instance
admit a solution that is both $\Delta_2^0$
and $low_2$";
"whether there exists a $\Delta_2^0$
$\mathsf{RT}_2^1$ such that any $\Delta_2^0$ non trivial
solution of which is high".
These questions are  related to the
currently most concerned problem in reverse mathematics,
whether $\mathsf{SRT}_2^0$ implies
$\mathsf{COH}$ in $\omega$-model.
We here prove that any instance of $\mathsf{RT}_2^1$
admit a generalized low solution.

\begin{theorem}\label{RT21th1}
For every instance of $\mathsf{RT}_2^1$,
$I_2^1$, there exists a non trivial
solution to $I_2^1$, $G$, such that
$G' =_T G\oplus \mathbf{0}'$.

\end{theorem}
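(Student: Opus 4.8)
The plan is to build $G$ by a forcing argument using Mathias conditions $(D, I)$ where $D$ is a finite set, $I$ an infinite set, and $\max D < \min I$, with $I$ required to be inside one fixed color class of the coloring $I_2^1$. Since the coloring has two colors, at least one color class is infinite; fix such a color $i$ and let $C_i = \{x : I_2^1(x) = i\}$, an infinite set. Every condition will have reservoir $I \subseteq C_i$, so any $G$ met by a generic filter is automatically a non-trivial (infinite) homogeneous set. The generic $G$ will be forced to satisfy the usual $\Sigma^0_1$ requirements guaranteeing it is infinite. The point of the theorem is the jump control: I want $G' \le_T G \oplus \mathbf{0}'$ (the reverse reduction is automatic), which amounts to deciding, for each $e$, whether $\Phi_e^G(e)\downarrow$, uniformly from $G$ and $\mathbf{0}'$.

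The key device is the standard dichotomy from the Cholak–Jockusch–Slaman $\mathrm{low}_2$ machinery, adapted here to get genuine lowness-over-$\mathbf{0}'$ rather than just $\mathrm{low}_2$. For a condition $(D, I)$ and a requirement index $e$, consider the set of finite $E \subseteq I$ with $\max D < \min E$ such that $\Phi_e^{D \cup E}(e)\downarrow$. Either (the "forcing" side) there is such an $E$ with $E \cup \{$something$\} \subseteq I$ still leaving an infinite tail inside $C_i$ — in which case we extend $D$ by $E$ and the computation is forced to converge — or (the "preservation" side) no finite extension inside a sufficiently thin sub-reservoir makes the computation converge, in which case we thin the reservoir to $I' \subseteq I$ (still infinite, still $\subseteq C_i$) so that $\Phi_e^G(e)$ is forced to diverge along the generic. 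The crucial combinatorial fact is that this dichotomy can be decided, and the appropriate extension found, using only $\mathbf{0}'$ together with the current finite data: the relevant "is there a suitable finite extension in every tail" question is a $\Pi^0_2$ (hence $\mathbf{0}'$-decidable relative to the reservoir being suitably uniform) predicate, and — this is the technical heart — one arranges the reservoirs to be uniformly $\mathbf{0}'$-computable along the construction so that the whole question stays at the level of $\mathbf{0}'$. I expect to formalize this by constructing a $\mathbf{0}'$-computable sequence of conditions $(D_0, I_0) \ge (D_1, I_1) \ge \cdots$ with $I_s$ uniformly $\mathbf{0}'$-computable (e.g.\ each $I_s$ is an infinite $\mathbf{0}'$-computable subset of $C_i$, chosen via a $\mathbf{0}'$-effective thinning), handling requirement $e$ at stage $s = e$, and setting $G = \bigcup_s D_s$.

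Granting such a construction, the jump computation goes as follows: to decide $\Phi_e^G(e)$, run the construction with oracle $\mathbf{0}'$ up to stage $e$; at that stage the dichotomy was resolved one way or the other, and in the first case $D_{e+1} \subseteq G$ already witnesses convergence, while in the second case divergence was permanently secured. Either way the answer is read off from $D_{e+1}$ and finitely much $\mathbf{0}'$-information, so $G' \le_T \mathbf{0}'$; but we actually get the sharper $G' \le_T G \oplus \mathbf{0}'$ for free (and indeed it must be $\ge_T$ that as well since $G, \mathbf{0}' \le_T G'$), giving $G' \equiv_T G \oplus \mathbf{0}'$.

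The main obstacle is the preservation side of the dichotomy together with the requirement that the reservoirs remain $\mathbf{0}'$-computable and uniformly so. Naively, thinning a reservoir to kill a potential convergence can push its complexity up; the work is in showing that a single $\mathbf{0}'$-oracle can, at each stage, both decide the $\Pi^0_2$ dichotomy and produce an index for the next reservoir as a $\mathbf{0}'$-computable infinite subset of the current one lying inside $C_i$. This is exactly the kind of bookkeeping that the $\mathrm{low}_2$ construction of \cite{Cholak2001} does with a $\mathbf{0}''$-oracle; the improvement to $\mathbf{0}'$ here is possible precisely because $\mathsf{RT}^1$ (unlike $\mathsf{RT}^2$) requires the reservoir only to avoid one color on \emph{singletons}, a $\mathbf{0}'$-decidable (indeed the instance-many-quantifier-free) condition, so the infinitely-branching "which block is big" analysis collapses. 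I would carry this out by first isolating a lemma: for every infinite $\mathbf{0}'$-computable $I \subseteq C_i$ and every $e$, $\mathbf{0}'$ uniformly decides whether some finite $D \cup E$ ($E \subseteq I$) forces $\Phi_e(e)\downarrow$ while leaving an infinite $\mathbf{0}'$-computable tail in $C_i$, and in the negative case $\mathbf{0}'$ produces an infinite $\mathbf{0}'$-computable $I' \subseteq I$ forcing divergence; then the theorem follows by iterating the lemma.
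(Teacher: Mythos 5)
Your plan breaks down at its very first step, and the failure is exactly the point of the theorem. The instance $I_2^1$ is \emph{arbitrary}: it is not assumed computable or even $\Delta_2^0$, so the color class $C_i$ you fix need not contain any infinite $\mathbf{0}'$-computable set at all (take the instance to be bi-immune relative to $\mathbf{0}'$). Hence your requirement that every reservoir $I_s$ be an infinite $\mathbf{0}'$-computable subset of $C_i$ cannot be met, and a fortiori the construction cannot be ``run with oracle $\mathbf{0}'$.'' Your conclusion also overshoots: $G'\leq_T \mathbf{0}'$ would give $G\leq_T G'\leq_T\mathbf{0}'$, i.e.\ a \emph{low} solution, and this is false in general even for $\Delta_2^0$ instances (Downey, Hirschfeldt, Lempp and Solomon built a $\Delta_2^0$ set such that neither it nor its complement has an infinite low subset). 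The theorem only claims $G'\leq_T G\oplus\mathbf{0}'$, precisely because $G$ cannot in general be $\Delta_2^0$; any proof that makes the whole construction $\mathbf{0}'$-effective relative to the instance is therefore proving a false statement. A secondary issue: even granting $\mathbf{0}'$-computable reservoirs, your extension/preservation dichotomy ``is there a finite $E\subseteq I$ with $\Phi_e^{D\cup E}(e)\downarrow$'' is $\Sigma^0_1(I)$, hence $\Sigma^0_2$, and needs $\mathbf{0}''$, not $\mathbf{0}'$; calling it ``$\Pi^0_2$ hence $\mathbf{0}'$-decidable'' does not close this gap.

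The missing idea, which is how the paper proceeds, is to make the forcing \emph{instance-independent}: one builds, uniformly in $\mathbf{0}'$, a single tree of conditions whose parts carry pairs of stems $(\rho_{k,l},\rho_{k,r})$ (candidate subsets of $A$ and of $\overline{A}$ for every possible instance $A$) and whose ``reservoir'' is a nonempty $\Pi^0_1$ class of $k$-partitions, so that the relevant dichotomy becomes emptiness of a $\Pi^0_1$ class, which is $\Sigma^0_1$ by compactness and thus genuinely $\mathbf{0}'$-decidable; a $\mathbf{0}'$-computable function $F$ records, for each part and each $e$, whether convergence or divergence of $\Phi_e^G(e)$ has been forced on the left or right side. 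Only afterwards, given the actual instance $A$, does one select a path of acceptable parts compatible with $A$, obtaining $G_l\subseteq A$ or $G_r\subseteq\overline{A}$; the jump control $G'\leq_T G\oplus\mathbf{0}'$ comes from the fact that $G\oplus\mathbf{0}'$ can reconstruct the chosen path through the condition tree (this is what the $0^n$-padding and the incomparability lemma are for) and then read the answers off $F$. None of this machinery appears in your proposal, and without something playing its role the argument does not go through.
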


Liu \cite{liu2015cone} proposed the
question that whether there exists
an instance of $\mathsf{RT}_3^1$ such that
of which the
 solution set is not Muchnik reducible
to any solution set of any instance of
$\mathsf{RT}_2^1$. We here give a positive
answer in theorem \ref{RT21th2}.
Similar results have been obtained independently
by 
Dzhafarov
\cite{dzhafarov2015cohesive},
Hirschfeldt andd Jockusch
\cite{hirschfeldt2015notions},
Patey \cite{patey2015weakness}.

The question is of technical interest.
Patey \cite{patey2015strength} proved that there exists
a $\mathbf{0}'$-computable instance of
$\mathsf{TT}_2^1$
, $I_{\mathsf{TT}^2_1}$ such that the solution
set of which is not Muchnik reducible
to any instance of $\mathsf{RT}_2^1$.
The proof relies on complexity of solution
space of $\mathsf{TT}_2^1$, i.e., fix an
arbitrary long initial segment of some
instance of $\mathsf{TT}_2^1$, $\rho$,
there exists sufficiently large $M$ such that
  given
any finitely many $M-$long initial segments
of  $\mathsf{TT}_2^1$-solutions
of $\rho$, namely
$\tau_i,i=1,\cdots,n$,
there exists some extension of $\rho$,
namely $\gamma$, such that any solution
to any instance extending $\gamma$ avoids
$\tau_i, i\leq n$.
However, this easy-avoidance property
does not holds for $\mathsf{RT}_3^1$.
$\mathsf{RT}_3^1$ codes the solutions
in a much more compact fashion. This is
reflected by the fact that any instance of
$\mathsf{RT}_3^1$ computes a solution of
itself.
The proof of theorem \ref{RT21th2}
employs the method in
Liu \cite{liu2015cone}.

\begin{theorem}\label{RT21th2}
There exists a $\mathbf{0}'$-computable
 instance of $\mathsf{RT}_3^1$,
$I_3^1$, such that
for every instance of $\mathsf{RT}_2^1$,
$I_2^1$,
there exists a non trivial solution to
$I_2^1$, $G_2^1$ such that $G_2^1$ does not
 compute any non trivial solution to
 $I_3^1$.

\end{theorem}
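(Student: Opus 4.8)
The plan is to recast Theorem~\ref{RT21th2} as a preservation statement and prove it by a two-layer forcing construction modeled on Liu~\cite{liu2015cone}. A $\mathsf{RT}_2^1$-instance is just a set $X\subseteq\omega$ with color classes $X$ and $\overline X$, and a non trivial solution is an infinite subset of one of the two classes; at least one class is infinite, and if one class is finite the other is computable, so an infinite computable subset already avoids everything non-computable. Hence it is enough to build one $\mathbf{0}'$-computable coloring $I_3^1=c_3:\omega\to\{1,2,3\}$, \emph{all three of whose color classes are infinite} and which has no infinite computable homogeneous set (so that ``non trivial solution to $I_3^1$'' means exactly ``infinite $c_3$-homogeneous set'' and the degenerate instances are handled), with the property: for every partition $\omega=X_0\sqcup X_1$ there is $\ell\in\{0,1\}$ with $X_\ell$ infinite and an infinite $G\subseteq X_\ell$ computing no infinite $c_3$-homogeneous set.

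\emph{First layer: the construction of $c_3$.} I would build $c_3\leq_T\mathbf{0}'$ by a $\mathbf{0}'$-effective (infinite-injury) construction satisfying, for every pair $\langle e_0,e_1\rangle$, a requirement $\mathcal{R}_{\langle e_0,e_1\rangle}$ asserting, in a forcing-free form, that there is no pair of disjoint infinite sets $A_0,A_1\subseteq\omega$ for which, both for $\ell=0$ and $\ell=1$, every infinite $B\subseteq A_\ell$ has an infinite subset $B'$ with $\Phi_{e_\ell}^{B'}$ an infinite $c_3$-homogeneous set; in words, $c_3$ cannot be ``coded'' along $\Phi_{e_0}$-images inside $A_0$ \emph{and} along $\Phi_{e_1}$-images inside $A_1$ simultaneously. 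To meet $\mathcal{R}_{\langle e_0,e_1\rangle}$ one waits until the hypothesis is seen to threaten, picks a large finite configuration of would-be homogeneous images on the two sides, and colors the relevant integers so that one of the two families is forced to contain two integers of different $c_3$-color; this is where having a \emph{third} color as a buffer is essential --- it is exactly the extra room $\mathsf{RT}_3^1$ has over $\mathsf{RT}_2^1$, and the reason the analogous requirement is unsatisfiable for a $2$-coloring. Keeping all three classes infinite and killing computable homogeneous sets is a standard additional batch of requirements, and a true-stages bookkeeping keeps the whole construction $\mathbf{0}'$-effective.

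\emph{Second layer: for a fixed nondegenerate instance $X$} (write $X_0=X$, $X_1=\overline X$, both infinite), run a two-sided Mathias forcing building stems-plus-reservoirs $\big((F_0,Z_0),(F_1,Z_1)\big)$ with $F_\ell\cup Z_\ell\subseteq X_\ell$. Process the functionals $\Phi_0,\Phi_1,\dots$ in order on a ``current side'' $\ell^\ast$, always adding a fresh stem element; at $\Phi_j$, try to extend so that side $\ell^\ast$ forces ``$\Phi_j^{G_{\ell^\ast}}$ is not an infinite $c_3$-homogeneous set''. If this fails, record ``unavoidability on side $\ell^\ast$ for $\Phi_j$'', \emph{switch} the current side, freeze the abandoned side, and restart the list. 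A second switch, from side $1-\ell^\ast$ at some $\Phi_{j'}$, would leave us at a condition below which unavoidability holds on \emph{both} sides (the frozen side-$\ell^\ast$ fact is fixed, and unavoidability on the other side is inherited by extensions), with $A_0=Z_0$ and $A_1=Z_1$ witnessing the failure of $\mathcal{R}_{\langle j,j'\rangle}$ --- contradicting the first layer. So at most one switch occurs; the side eventually built forever runs through \emph{all} $\Phi_j$, is infinite, and computes no infinite $c_3$-homogeneous set, which is the required $G$.

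The main obstacle is the combinatorial core of the first layer: proving that a single $\mathbf{0}'$-computable $3$-coloring with all classes infinite can defeat every ``two-sided coding'' $\mathcal{R}_{\langle e_0,e_1\rangle}$, and doing so with only a $\mathbf{0}'$ oracle. A secondary technical point is checking that the forcing relation ``$\Phi^{G_\ell}$ is not an infinite $c_3$-homogeneous set'' has the right density and definability properties inside the \emph{arbitrary} reservoirs $Z_\ell\subseteq X_\ell$, so that a forcing-failure genuinely delivers the hypothesis that $\mathcal{R}_{\langle e_0,e_1\rangle}$ forbids; this reservoir management is where the machinery of Liu~\cite{liu2015cone} for $\mathsf{RT}_2^2$ is adapted, the genuinely new ingredient being the three-color largeness argument used to construct $I_3^1$.
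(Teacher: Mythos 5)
There is a genuine gap, and it is in your first layer. The requirement $\mathcal{R}_{\langle e_0,e_1\rangle}$ you propose is unsatisfiable by \emph{any} $3$-coloring: take $e_0=e_1$ to be an index of the identity functional $\Phi^{B'}=B'$, let $A_0,A_1$ be any two disjoint infinite sets (even a partition of $\omega$), and note that by pigeonhole every infinite $B\subseteq A_\ell$ already has an infinite $c_3$-homogeneous subset $B'$, so the forbidden hypothesis holds for every $c_3$. Thus no $\mathbf{0}'$-construction (infinite-injury or otherwise) can meet these requirements, and the claim that the third color serves as a ``buffer'' against such two-sided coding is not true in this formulation --- three colors give no advantage at all against the identity functional. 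A second, related problem is that even a correctly weakened first-layer statement would have to quantify over the reservoirs $Z_0,Z_1$ produced by your second-layer forcing, and these are subsets of an \emph{arbitrary} instance $X$, hence of arbitrary Turing degree; a $\mathbf{0}'$-effective construction of $c_3$ cannot ``wait until the hypothesis is seen to threaten'' for continuum many candidate pairs whose threat predicate is not arithmetically detectable. Finally, the information you extract from a forcing failure is only a density statement (every extension has a further extension on which $\Phi_j$ looks homogeneous so far), which is weaker than the hypothesis your $\mathcal{R}$ forbids, so the ``at most one switch'' argument does not connect the two layers even formally.

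The paper avoids exactly this trap by refusing to decouple the two layers. It runs a single $\mathbf{0}'$-effective construction of a tree of conditions $((\rho_{1,l},\rho_{1,r}),\ldots,(\rho_{k,l},\rho_{k,r}),X_1\oplus\cdots\oplus X_k,k)$ with \emph{low} reservoirs, built along all $\mathsf{RT}_2^1$-instances simultaneously, and extends the initial segments $\beta^i$ of the $\mathsf{RT}_3^1$-instance inside the same recursion; the instance $I_2^1$ enters only afterwards, as a choice of an infinite path of acceptable parts with $\rho^i_{r_i,l}\subseteq A$, $\rho^i_{r_i,r}\subseteq\overline{A}$ (lemma \ref{RT21lem6}). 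The real use of the third color is not a diagonalization against abstract coding but the rotation trick: one asks whether there is a coloring $Y$ extending $\beta^i$ such that the $\Pi_1^0$ classes $[T^{c_i,1}_{Y/\beta^i}]$, $[T^{c_i,1}_{Y+1/\beta^i}]$, $[T^{c_i,1}_{Y+2/\beta^i}]$ are all nonempty. If yes, the low basis theorem plus the $Cross$ operation produce six low subreservoirs covering $X_1^i$ (each element lies on the same side in two of the three partitions), and on each new part $\Phi_e^G$ would have to be homogeneous for two distinct rotations of the same coloring, which forces it to be trivial --- no commitment to $\beta$ is needed. Only if the class is empty does compactness yield a finite extension $\beta^{i,1}$ and finitely many stem extensions forcing explicit disagreement with $\beta^{i,1}$. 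That is the mechanism your proposal is missing, and without it (or something playing the same role) the two-layer plan cannot be repaired simply by tuning the first-layer requirement.
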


Actually, by the proof of theorem \ref{RT21th2},
it is plain to see that,
\begin{theorem}

There exists a $\mathbf{0}'$-computable
 instance of $\mathsf{RT}_{k}^1$,
$I_k^1$, such that
for every instance of $\mathsf{RT}_{k-1}^1$,
$I_{k-1}^1$,
there exists a non trivial solution to
$I_{k-1}^1$, $G_{k-1}^1$ such that $G_{k-1}^1$ does not
 compute any non trivial solution to
 $I_k^1$. Where $k-1\geq 1$ is arbitrary.
\end{theorem}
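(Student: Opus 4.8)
The plan is to observe that the last statement is just the uniform version of Theorem~\ref{RT21th2}, and that the construction proving Theorem~\ref{RT21th2} never used the numbers $2$ and $3$ in an essential way. So I would first isolate from the proof of Theorem~\ref{RT21th2} the precise combinatorial ingredients, then verify each one survives the substitution $2 \rightsquigarrow k-1$, $3 \rightsquigarrow k$. Concretely, the proof of Theorem~\ref{RT21th2} builds a $\mathbf{0}'$-computable $\mathsf{RT}_3^1$-instance $I_3^1$ by a finite-injury–style construction (following Liu~\cite{liu2015cone}), diagonalising against all pairs $(I_2^1, \Phi_e)$ where $I_2^1$ ranges over $\mathsf{RT}_2^1$-instances and $\Phi_e$ over Turing functionals, while simultaneously a Mathias-forcing argument produces, for each $I_2^1$, a non-trivial solution $G_2^1$ with $\Phi_e^{G_2^1}$ not a non-trivial solution of $I_3^1$. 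The only feature of "$3$ versus $2$" that matters is that $k > k-1$, i.e.\ that an $\mathsf{RT}_k^1$-instance partitions $\mathbb{N}$ into one more colour class than an $\mathsf{RT}_{k-1}^1$-instance; this strict inequality is exactly what gives the ``compact coding'' phenomenon discussed after Theorem~\ref{RT21th2} (any $\mathsf{RT}_k^1$-instance computes a solution of itself, for every $k \geq 1$), and it is what the diagonalisation exploits.

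I would carry this out in the following order. First, restate the forcing framework: a condition is a pair $(F, X)$ with $F$ a finite set, $X$ an infinite set, $\max F < \min X$, and $X$ contained in one colour class of the $\mathsf{RT}_{k-1}^1$-instance $I_{k-1}^1$ under consideration (or in one of finitely many, depending on the exact bookkeeping in Liu's argument); the generic $G_{k-1}^1 = \bigcup F$ is then automatically a non-trivial solution of $I_{k-1}^1$. Second, transcribe the requirement list: for each index $e$ we need either $\Phi_e^{G_{k-1}^1}$ partial/finite, or $\Phi_e^{G_{k-1}^1}$ infinite but not homogeneous for $I_k^1$; meeting these is where the $\mathbf{0}'$-computable instance $I_k^1$ gets its colours assigned, using the extra colour unavailable to $\mathsf{RT}_{k-1}^1$-solutions. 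Third, verify the two density lemmas of \cite{liu2015cone} go through verbatim with $k-1$ colour classes in place of $2$: the ``forcing $\Phi_e^G$ to be finite or to receive a bad colour'' lemma, and the ``largeness/cohesiveness is preserved'' lemma. Fourth, check the effectivity bookkeeping: the construction of $I_k^1$ is driven by $\mathbf{0}'$, exactly as in Theorem~\ref{RT21th2}, and nothing in the arithmetic complexity changes when $k$ grows (the colour of each integer is still decided by a $\mathbf{0}'$-computable search). Finally, conclude non-triviality: a non-trivial $\mathsf{RT}_k^1$-solution must be infinite inside one colour class, and the construction ensures $\Phi_e^{G_{k-1}^1}$ either is not infinite there or meets both of two fixed colour classes of $I_k^1$.

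The step I expect to be the genuine (though modest) obstacle is the third one: confirming that the core density lemma of \cite{liu2015cone}, which is stated and proved for $\mathsf{RT}_2^1$-instances, is truly insensitive to the number of colour classes. One has to re-examine the case analysis inside that lemma — in Liu's proof it branches on which colour class the reservoir $X$ can be thinned into while still keeping $\Phi_e^G$ from stabilising onto a single $I_k^1$-colour — and make sure that ``$2$'' there was only ever used as ``finitely many, and fewer than the number of $I_k^1$-colours,'' so that the argument is uniform in $k \geq 2$. Since the excerpt explicitly remarks that ``by the proof of theorem \ref{RT21th2}, it is plain to see that'' the parametric version holds, I would present this as: the identical construction, with ``$2$'' systematically replaced by ``$k-1$'' and ``$3$'' by ``$k$,'' yields the result, and I would include only the one or two sentences needed to point out that each lemma's proof used nothing about the specific values. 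The remaining steps are purely routine re-indexing.
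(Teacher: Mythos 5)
Your top-level plan --- rerun the proof of Theorem \ref{RT21th2} with $2$ replaced by $k-1$ and $3$ by $k$ --- is exactly what the paper intends, but your reconstruction of that proof misdescribes its architecture in a way that would break the argument if carried out as written. You take a condition to be a single Mathias pair $(F,X)$ with the reservoir $X$ inside one colour class of the particular instance $I_{k-1}^1$ "under consideration," and you propose to build $I_k^1$ by diagonalising against all pairs $(I_{k-1}^1,\Phi_e)$. That cannot work: the quantifier order requires the $\mathbf{0}'$-computable instance $I_k^1$ to be fixed before the (arbitrary, in general non-computable) instance $I_{k-1}^1$ is given, and there are continuum many such instances, so no countable $\mathbf{0}'$-construction can enumerate them. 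The paper's construction avoids this precisely by \emph{not} forcing along a single instance: conditions are multi-part tuples carrying a pair (in general, a $(k-1)$-tuple) of initial segments per part together with one low partition, the parts form a tree, and Lemma \ref{RT21lem6} guarantees that \emph{every} instance $A$ admits a path of acceptable parts with $\rho^i_{r_i,l}\subseteq A$, $\rho^i_{r_i,r}\subseteq\overline A$; the paper's closing discussion explicitly stresses that this "along all instances simultaneously" feature is the point of difference from \cite{liu2015cone}.

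Consequently the step you flag as the genuine obstacle is aimed at the wrong lemma. The place where $k>k-1$ is actually used is not the "computes a solution of itself" remark, nor a density lemma of \cite{liu2015cone} about thinning reservoirs, but the $Cross$ operation: one takes the $k$ shifted instances $Y+h$, $h<k$, low paths through the classes $[T^{c_i,1}_{(Y+h)/\beta^i}]$, and intersects them pairwise within each of the $k-1$ sides, producing $(k-1)\binom{k}{2}$ new parts. The covering identity generalizing \eqref{RT21arg1} holds by pigeonhole exactly because each $m\in X^i_1$ is assigned a side for each of the $k$ shifts and there are only $k-1$ sides, so two shifts collide; this is what keeps Lemma \ref{RT21lem6} (hence acceptability along every instance) true, while the disagreement argument for two distinct shifts $h\neq g \pmod k$ (no common non-trivial homogeneous set meeting $\{1,\dots,|\beta^i|\}$) is colour-count independent. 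The remaining re-indexing is the $(k-1)$-way analogue of the left/right dichotomy in Lemmas \ref{RT21lem10}, \ref{RT21lem7} and \ref{RT21lem8}, plus the unchanged $\mathbf{0}'$-effectivity bookkeeping for $\beta^i$. With your framework replaced by this one, the "plain to see" generalization does go through; as it stands, your proposal would not.
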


The rest of this paper is devoted to the
proof of the two theorem. We also propose
new problems in section \ref{RT21secfurther}

\section{Notations}
We write $\rho *\tau$ to denote the string
that concatenate $\tau$ after $\rho$.
We sometimes regard a binary string
$\rho\in 2^{<\omega}$ as a set
$\{j:\rho(j)=1\}$, and
write $\rho\subseteq X$ for
set containing relation,
$\rho-\tau$ for set minus operation,
$\rho\cap X$ for set intersection
operation. For a sequence of string,
$\cdots
\rho^i\subset \rho^{i+1}\cdots, i\in\omega$,
we write
$\bigcup\limits_{i\in\omega}\rho^i$
for the string $X$ such that
$(\forall i\in\omega)
X\upharpoonright_{1}^{|\rho^i|}=\rho^i$.
When $\rho\cup\tau$ denote set union,
we make assertion.
We write $\rho\subset Y$ iff
$\rho$ is prefix of $Y$.
\emph{Empty string} is denoted by
$\varepsilon$.

For a tree $T$, $[T]$ is
the set of paths through $T$.

For a set $X$ we write
$X'$ for the canonical jump,
i.e.,
$X'(n) = 1$ iff $\Phi_n^X(n)\downarrow$.
We write $\mathbf{0}'$ for the jump
(Turing) degree.

\section{Proof of theorem \ref{RT21th1}}
\subsection{Forcing conditions}\label{RT21subsec-1}
Firstly, recall the Mathias forcing.
\begin{definition}
\label{def1}
\

\begin{itemize}
\item A Mathias condition is a pair $(\sigma,X)$
with $\sigma \in 2^{<\omega}$ and $X \in 2^\omega$.

\item $(\tau,Y)$ extends the Mathias condition $(\sigma,X)$
iff $\sigma \subset \tau$ and $Y/\tau \subseteq X/\sigma$.
Write $(\tau,Y)\leq (\sigma,X) $ to denote
the extension relation.

\item A set $G$ satisfies the Mathias condition
$(\sigma,X)$ if $\sigma \subset G$
 and $G \subseteq X/\sigma$.
\end{itemize}

\end{definition}

We say
string $X\in 2^\omega$ codes an
\emph{ordered $k$-partition} iff
\begin{itemize}
\item $X=X_0\oplus X_1\oplus\cdots\oplus X_{k-1}$ and

\item $\cup_{i=0}^{k-1}X_i=\omega$
\end{itemize}

 A class $P$ is a $k$-partition class iff
  $\forall X\in P$, $X$ codes an ordered $k$-partition.

  \begin{definition}
In the proof of theorem \ref{RT21th1},
a forcing condition is a tuple,
$((\rho_{1,l},\rho_{1,r})
,\ldots,(\rho_{k,l},\rho_{k,r}),P,k)$,
where $k>0$ indicates the
number of partitions
and $P$ is a $\Pi^{0}_1$ $k$-partition class.
\end{definition}

\begin{definition}
\label{def-sat}
We say set
$G$ \emph{satisfies}
condition $((\rho_{1,l},\rho_{1,r})
,\ldots,(\rho_{k,l},\rho_{k,r}),P,k)$
part $j$ left side
iff there exists $X_1 \oplus
\cdots \oplus X_{k} \in P$
such that
$G$ satisfies  $(\rho_{i,l},X_i)$.
Similarly for right side.

We say set $G$ \emph{satisfies}
condition $c$ on part $j$ iff
it satisfies condition $c$ part $j$
on left or right side.
\end{definition}

  \begin{definition}
\label{extend}
We say condition
$d=((\tau_{1,l},\tau_{1,r}),\ldots,(\tau_{m,l},\tau_{m,r}),Q,
m)$ \emph{extends} condition

$c=((\sigma_{1,l},\sigma_{1,r}),\ldots,(\sigma_{k,l},\sigma_{k,r}),
P,k)$,
denoted by $d\leq c$, iff there is a function $f : m
\rightarrow k$ satisfying
 ($\forall i\leq m\ \forall Y_1 \oplus
\cdots \oplus Y_{m} \in Q$) $(\exists X_1 \oplus \cdots \oplus
X_{k} \in P)$
such that $(\tau_{i,l},Y_i)\leq (\sigma_{f(i),l},X_{f(i)})
\wedge (\tau_{i,r},Y_i)\leq (\sigma_{f(i),r},X_{f(i)})$.

We say that
\begin{itemize}
\item$f$ witnesses this extension;
\item part $i$ of
$d$ refines part $f(i)$ of $c$.
\end{itemize}
\end{definition}

\begin{definition}
Part $i$ of condition
 $((\rho_{1,l},\rho_{1,r})
,\ldots,(\rho_{k,l},\rho_{k,r}),P,k)$
 is \emph{acceptable } iff
($\exists X_1
\oplus \cdots \oplus X_{k} \in P$)
 $|X_i|=\infty$
\end{definition}

\subsection{Outline}\label{RT21subsec0}
We will construct a sequence of
forcing conditions $c_0\geq c_1\geq \cdots
\geq c_i\geq \cdots$, together with
a $\mathbf{0}'$-computable function $F:
(c_i, k)\mapsto (side, type)$. The function
$F$ tells how $c_{<e,r>}$ satisfy requirement
$R_e,R_i$, i.e.,
for every forcing condition $c_{<e,r>}$ and part
$k$ of $c_{<e,r>}$, $F(c_{<e,r>},k)=(left, 1)$ iff
$\Phi_e^{\rho^{<e,r>}_{k,l}}(e)\downarrow$;
$F(c_{<e,r>},k)=(left, 0)$ iff
for every $G$ satisfying $c_{<e,r>}$
on part $k$ left side, $\Phi_e^G(e)\uparrow$
(lemma \ref{RT21lem1}).
Similarly for $F(c_{<e,r>},k)=(right, type)$.

The parts of these forcing conditions
form a tree $T$. Nodes on level $s$ of the tree
 represent the parts of condition
$c_s$. Node $j$ is a successor of node $i$ iff
for some $s$, $j$ belongs to level $s+1$,
$i$ belongs to level $s$, and
$f_{s+1}(j)=i$ where $f_{s+1}$
is the witness
of relation $c_{s+1}\leq c_s$
 (see definition \ref{extend}).

We will prove that for any instance
$Y$, there exists a path along the forcing condition
tree $T$, namely part $r_i$ of
condition $c_i$, $i\in\omega$,
such that part $r_i$ of $c_i$ is acceptable
and $\rho^i_{r_i,l}\subseteq A\wedge
\rho^i_{r_i,r}\subseteq\overline{A}$
(subsection \ref{RT21subsec2}).
 And either $\bigcup\limits_{j=1}^\infty
\rho^j_{r_j,l}$ or $\bigcup\limits_{j=1}^\infty
\rho^j_{r_j,r}$ is generalized low
(subsection \ref{RT21subsec3}) and infinite
(lemma \ref{RT21lem3}).

To show the generalized low property,
we prove that either for every
$e$ there exists $r$,
$F(c_{<e,r>},r_{<e,r>}) = (left,\cdot)$;
or for every $r$ there exists $e$,
$F(c_{<e,r>},r_{<e,r>}) = (right,\cdot)$
(lemma \ref{RT21lem4}).
Assume, without loss of generality,
$\forall e\exists r\
F(c_{<e,r>},r_{<e,r>}) = (left,\cdot)$.
Then we prove that given $G_l=\bigcup\limits_{j=1}^\infty
\rho^j_{r_j,l}$ we can $\mathbf{0}'-$compute the
path along $T$, i.e., the function
$i\mapsto r_i$ (section \ref{RT21subsec3}).
Given $e$, to decide
whether $\Phi_e^{G_l}(e)$ halt,
simply find $r$ such that
$F(c_{<e,r>},r_{<e,r>}) = (left,z)$.
Finally, we prove that if
$\forall e\exists r\
F(c_{<e,r>},r_{<e,r>}) = (left,\cdot)$,
then $|G_l|=\infty$ (lemma \ref{RT21lem5}).

\subsection{Constructing
forcing conditions and $F$}\label{RT21subsec1}
We start with condition
$c_{-1}=( (\varepsilon,\varepsilon), \{\omega\},1)$.
Given condition $c_i$, we show how to construct
$c_{i+1}$. Suppose $i+1 = <e,r>$.
We will construct a sequence of forcing conditions
$c_{i,1}\geq c_{i,2}\geq\cdots\geq c_{i,k}=
c_{i+1}$, where $k$ is the number of parts
of $c_i$, each dealing
with a part of $c_i$ to garantee the
successor of that part of $c_{i+1}$ forces
$R_e, R_r$ as described in lemma \ref{RT21lem1}.

Now we construct $c_{i,1}$
Consider the following
$\Pi_1^0$ class,
\begin{align}
[T^{c_i,1}]
=\{X_{1l}&\oplus X_{1r}\oplus X_2\oplus\cdots\oplus
X_k:
(X_{1l}\cup X_{1r})
\oplus X_2\oplus \cdots \oplus X_k\in
P_i, \\ \nonumber
&\hspace{1cm}(\forall Z)
\Phi_e^{(Z\cap X_{1l})/\rho^i_{1,l}}(e)
\uparrow\wedge
\Phi_r^{(Z\cap X_{1r})/\rho^i_{1,r}}(r)
\uparrow
\}
\end{align}
We divide into two cases,
(1) $[T^{c_i,1}]=\emptyset$;
(2) $[T^{c_i,1}]\ne\emptyset$.

If $[T^{c_i,1}]\ne \emptyset$
(in this case we adopt type 0 extension),
split
part 1 of
$c_i$ into two parts with identical
initial segment $(\rho^i_{1,l},\rho^i_{1,r})$,
replace $P_i$ with $[T^{c_i,1}]$,
i.e.,
$$c_{i,1} =
((\rho^i_{1,l},\rho^i_{1,r}),
(\rho^i_{1,l},\rho^i_{1,r}),
(\rho^i_{2,l},\rho^i_{2,r}),
(\rho^i_{3,l},\rho^i_{3,r}),\cdots,
(\rho^i_{k,l},\rho^i_{k,r}),
[T^{c_{i},1}], k+1)$$.
In this case, define
$F(c_{i+1}, 1) =
(left, 0)$,
$F(c_{i+1}, 2) =
(right, 0)$.

Else if
$[T^{c_i,1}]=\emptyset$
(in this case we adopt type 1 extension),
by compactness, there exists
$n$ such that for all
$X= X_1\oplus\cdots\oplus
X_k\in P_i$,
$\rho\in \{0,1\}^n$ there exists
$\tau\in\{0,1\}^n$ such that
either $\tau\subseteq\rho\cap X_1\wedge
\Phi_e^{\rho^i_{1,l}*\tau}(e)\downarrow$ or $
\tau\subseteq \overline{\rho}\cap X_1\wedge
\Phi_r^{\rho^i_{1,r}*\tau}(r)\downarrow$.
Since $P_i\ne \emptyset$,
fix an arbitrary $X\in P_i$.
Suppose $\tau_1,\cdots, \tau_{2^n}$
cover all $\rho\in \{0,1\}^n$ in above way
witnessed by $X=X_1\oplus\cdots\oplus X_k$.
If $\tau_j\subseteq\rho\cap X_1\wedge
\Phi_e^{\rho^i_{1,l}*\tau}(e)\downarrow$
let $\rho^{i,1}_{j,l} =
\rho^i_{1,l}*\tau_j $,
$\rho^{i,1}_{j,r} =
\rho^i_{1,r}*0^n$
and define
$F(c_{i+1},j) =
(left, 1)$;
else if
$
\tau_j\subseteq \overline{\rho}\cap X_1\wedge
\Phi_r^{\rho^i_{1,r}*\tau}(r)\downarrow$
let $\rho^{i,1}_{j,r}
=\rho^i_{1,r}*\tau_j$,
$\rho^{i,1}_{j,l} =
\rho^i_{1,l}*0^n$
and define
$F(c_{i+1},j) =
(right, 1)$.
To construct $c_{i,1}$, split
part 1 of $c_i$ into
$2^n$ many parts, and concatenate
$0^n$ to
initial segments of other parts
of $c_i$, i.e.,
 extend $\rho^i_{j,l},\rho^i_{j,r}$
to $\rho^i_{j,l}*0^n, \rho^i_{j,r}*0^n$
for all $j\neq 1$.
Furthermore, shrink $P_i$
to $Z_1\oplus \cdots \oplus Z_k$ such
that $Z_1\supseteq \bigcup\limits_{j=1}
^{2^n}\tau_j$ and replicate
part $1$  of the shrinked
$P_i$ for $2^n$ many
times, i.e.,
\begin{align}
P_{i,1} = \{
\underbrace{Z_1\oplus\cdots
\oplus Z_1}\limits_{2^n\text{ many}}
\oplus Z_2\oplus Z_3\oplus\cdots
\oplus Z_k:
Z_1\oplus Z_2\oplus\cdots\oplus
Z_k\in P_i\wedge
Z_1\supseteq
\bigcup\limits_{j=1}^{2^n}
\tau_{j}
\}
\end{align}
(Here $\bigcup\limits_{j=1}^{2^n}
\tau_j$ is regarded as a set.)
In summary,

$$c_{i,1} =
((\rho^{i,1}_{1,l},
\rho^{i,1}_{1,r}),\cdots,
(\rho^{i,1}_{2^n,l},
\rho^{i,1}_{2^n,r}),
\cdots,(\rho^{i,1}_{2^n+k-1,l},
\rho^{i,1}_{2^n+k-1,r}),
P_{i,1}, 2^n+k-1)$$
Where for all $j>2^n$,
$\rho^{i,1}_{j,l}
=\rho^{i}_{j-2^n+1,l}*0^n$,
$\rho^{i,1}_{j,r}
=\rho^i_{j-2^n+1,r}*0^n$.
It is clear that
$P_{i,1}\ne\emptyset$
since it at least
contains $X=X_1\oplus\cdots\oplus
X_k$.

\begin{remark}
The purpose to concatenate
$0^n$ to  initial segments of the other
parts is to enable $\mathbf{0}'\oplus G$-
compute the path along the forcing condition
tree, i.e., the function $i\mapsto r_i$
(see lemma \ref{RT21lem3} and subsection \ref{RT21subsec3}).
Note that in this way, all initial segments a forcing condition
are of identical length.
\end{remark}
It is clear that
$c_{i,1}\leq c_i$.
The forcing condition
$c_{i,1}$ dealt with
part $1$ of $c_i$.
After
$c_{i,1}$ is constructed,
construct $c_{i,2}\geq
c_{i,3}\geq \cdots \geq
c_{i,k}$ similarly
to deal with part $2,3,\cdots,k$ of
$c_i$.

The following lemma \ref{RT21lem1}
says that function $F$ tells
how forcing conditions $c_i,i\in\omega$,
satisfy
the requirements.
\begin{lemma}\label{RT21lem1}
The function $F$ is $\mathbf{0}'-$
computable and
\begin{itemize}
\item
If $F(c_{<e,r>}, j) = (left, 1)$,
then for all $G$ satisfying
$c_i$ on part $j$ left side,
i.e., $G\subseteq X_j/\rho^i_{j,l}\wedge
G\supset\rho^i_{j,l}$,
$\Phi_e^G(e)\downarrow$;

Similarly,
\item
If $F(c_{<e,r>}, j) = (left, 0)$,
then for all $G$ satisfying
$c_i$ on part $j$ left side,
$\Phi_e^G(e)\uparrow$;

\item
If $F(c_{<e,r>}, j) = (right, 1)$,
then for all $G$ satisfying
$c_i$ on part $j$ right side,
$\Phi_r^G(r)\downarrow$;
\item
If $F(c_{<e,r>}, j) = (right, 0)$,
then for all $G$ satisfying
$c_i$ on part $j$ right side,
$\Phi_r^G(r)\uparrow$;

\end{itemize}

\end{lemma}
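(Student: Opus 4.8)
The plan is to verify the four clauses by unwinding the construction of $c_{i,1}$ (and hence of $c_{i+1}$) case by case, and to check the complexity claim by noting that every decision made during the construction is a $\Pi_1^0$ or $\Sigma_1^0$ question about the $\Pi_1^0$ class $P_i$, uniformly in an index for $P_i$, so that a $\mathbf{0}'$ oracle suffices. I would organize the argument around the two cases in the construction: the \emph{type $0$} case, in which $[T^{c_i,1}]\neq\emptyset$ and we set $F(c_{i+1},1)=(left,0)$ and $F(c_{i+1},2)=(right,0)$; and the \emph{type $1$} case, in which $[T^{c_i,1}]=\emptyset$ and compactness gives the finite covering $\tau_1,\dots,\tau_{2^n}$, with $F(c_{i+1},j)$ recording whether $\tau_j$ forces $\Phi_e$ or $\Phi_r$ to halt. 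Since each part $j$ of $c_{i+1}$ either is one of these newly split parts of $c_i$'s part $1$ (or the analogous split of part $t$ when we pass to $c_{i,t}$), or is an old part carried along with $0^n$ appended, the verification reduces to these local situations.

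For the two \emph{type $1$} clauses, if $F(c_{<e,r>},j)=(left,1)$ then by construction $\rho^{i,1}_{j,l}=\rho^i_{1,l}*\tau_j$ with $\Phi_e^{\rho^i_{1,l}*\tau_j}(e)\downarrow$; any $G$ satisfying $c_{i+1}$ on part $j$ left side has $\rho^{i,1}_{j,l}\subset G$, hence $\Phi_e^{\rho^{i,1}_{j,l}}(e)\downarrow$ implies $\Phi_e^G(e)\downarrow$ by use monotonicity. The $(right,1)$ clause is symmetric. For the \emph{type $0$} clauses, suppose $F(c_{<e,r>},j)=(left,0)$, i.e.\ $j$ is the left copy obtained by splitting part $1$. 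Then any $G$ satisfying $c_{i+1}$ on part $j$ left side comes with some $X_{1l}\oplus X_{1r}\oplus X_2\oplus\cdots\oplus X_k\in[T^{c_i,1}]$ and $\rho^i_{1,l}\subset G\subseteq X_{1l}/\rho^i_{1,l}$; in particular $G=(G\cap X_{1l})/\rho^i_{1,l}$, so the defining condition of $[T^{c_i,1}]$ (applied with $Z=G$, after re-adjoining $\rho^i_{1,l}$) yields $\Phi_e^G(e)\uparrow$. The $(right,0)$ clause is symmetric, using $\Phi_r^{(Z\cap X_{1r})/\rho^i_{1,r}}(r)\uparrow$. I would also note explicitly that these verifications are stable under passing from $c_{i,1}$ through $c_{i,k}=c_{i+1}$, since later stages only append more bits to already-committed initial segments and refine $P$, which can only preserve a $\Phi\uparrow$ verdict and cannot undo a $\Phi\downarrow$ verdict.

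For the complexity claim, observe that the construction is driven by deciding, for each $t$, whether $[T^{c_{i,t-1},t}]=\emptyset$; this is a $\Sigma_1^0$ question relative to an index for the $\Pi_1^0$ class in play, and an index for that class is obtained primitive-recursively from an index for $P_i$ together with $e,r$. When the class is empty, the bound $n$ and the witnessing strings $\tau_1,\dots,\tau_{2^n}$ (with the accompanying $\Phi_e^{\cdot}(e)\downarrow$ / $\Phi_r^{\cdot}(r)\downarrow$ verdicts) can be found computably by searching for a halting configuration; when it is nonempty, $F$ is assigned outright. Thus $\mathbf{0}'$ suffices to carry out the whole recursion $c_{-1}\mapsto c_0\mapsto\cdots$ and to read off $F(c_{<e,r>},j)$. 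The main obstacle I anticipate is purely bookkeeping rather than conceptual: one must track how the part index $j$ of $c_{i+1}$ sits inside the iterated construction $c_{i,1}\geq\cdots\geq c_{i,k}$—which original part of $c_i$ it refines, and whether it is a split part or a carried-along part padded by $0^n$—so that the right instance of the type $0$ / type $1$ dichotomy is applied; once the indexing is pinned down, each clause is immediate from use monotonicity of $\Phi$.
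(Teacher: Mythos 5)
Your proposal is correct and follows essentially the same route as the paper, whose own proof simply asserts that the four clauses are immediate from the construction of $c_i$ and that the construction of $c_i$ and $F$ is uniform in $\mathbf{0}'$; your case analysis (type $1$: use monotonicity from $\Phi_e^{\rho^i_{1,l}*\tau_j}(e)\downarrow$; type $0$: instantiating the universally quantified $Z$ in $[T^{c_i,1}]$ with $G$; persistence under the refinements $c_{i,1}\geq\cdots\geq c_{i,k}$; emptiness of a $\Pi^0_1$ class being $\Sigma^0_1$) is exactly the detail the paper leaves implicit. No substantive discrepancy to report.
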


\begin{proof}
The four items are
obvious due to the construction
of $c_{i}$.
Note that the construction
of $c_i$
 and $F$ is uniform in $\mathbf{0}'$.
 Thus, $F\leq_T \mathbf{0}'$.
\end{proof}

To construct the set $G$, we need the following
lemma, which says that the forcing condition
tree is built along all
instances of $\mathsf{RT}_2^1$.
\begin{lemma}\label{RT21lem2}
For any instance of $\mathsf{RT}^1_2$,
$Y$, any forcing condition
$c_i$ there exists
$j\in \omega$,
parts $k_1,\cdots,k_{j}$
of $c_i$
with
$(\forall s\leq j)\rho^i_{k_s,l}
\subseteq Y\wedge
\rho^i_{k_s,r}
\subseteq \overline{Y}$
such that
\[
(\forall X=X_1\oplus \cdots\oplus
X_k\in P_i)\ \
\bigcup\limits_{s=1}^j
X_{k_s}=\omega
\]

\end{lemma}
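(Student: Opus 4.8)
The plan is to prove the statement by induction along the construction $c_{-1}\ge c_0\ge c_1\ge\cdots$, and, inside each passage $c_i\rightsquigarrow c_{i+1}$, along the auxiliary chain $c_{i,1}\ge\cdots\ge c_{i,k}=c_{i+1}$; thus it suffices to settle the base case $c_{-1}$ and to show that the asserted property is preserved by a single elementary extension $d\le c$ obtained by applying, to one part $p$ of $c$, either a ``type $0$'' split (when $[T^{c,p}]\ne\emptyset$: replace part $p$ by two parts with the same pair of initial segments, and replace the partition class by $[T^{c,p}]$) or a ``type $1$'' split (when $[T^{c,p}]=\emptyset$: replace part $p$ by $2^n$ parts, one for each $\rho\in\{0,1\}^n$, the part for $\rho$ carrying initial segments $(\rho_{p,l}*\tau_\rho,\;\rho_{p,r}*0^n)$ or $(\rho_{p,l}*0^n,\;\rho_{p,r}*\tau_\rho)$ according to whether $\tau_\rho$ entered as a ``left'' or a ``right'' witness, appending $0^n$ to the initial segments of every other part, and replicating the $p$-th block of the class $2^n$ times). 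For the base case $c_{-1}=((\varepsilon,\varepsilon),\{\omega\},1)$ take $j=1$, $k_1=1$: $\varepsilon\subseteq Y$, $\varepsilon\subseteq\overline{Y}$, and the single block of the unique $X=\omega$ is $\omega$. In the inductive step fix parts $k_1,\dots,k_j$ of $c$ witnessing the property (assumed distinct), and recall that all initial segments of $c$ share a common length $\ell$ (see the remark after the construction of $c_{i,1}$).

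For a type $0$ split: each element of $[T^{c,p}]$ has the form $X_{pl}\oplus X_{pr}\oplus(\text{the other old blocks})$ and projects to an element of the old class whose $p$-th block is $X_{pl}\cup X_{pr}$. If $p\notin\{k_1,\dots,k_j\}$, the same (relabelled) parts still work: their initial segments are untouched and hence still lie in $Y$ resp. $\overline{Y}$, and the blocks they name are unchanged, so $\bigcup_s X_{k_s}=\omega$ persists via the projection. If $p=k_1$, replace the single index $k_1$ by the two new parts coming from $p$; they again carry $\rho_{p,l}\subseteq Y$ and $\rho_{p,r}\subseteq\overline{Y}$, and together they contribute $X_{pl}\cup X_{pr}$, which is exactly the block $X_p$ they replace, so the union stays $\omega$.

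For a type $1$ split — the only delicate case — the situation $p\notin\{k_1,\dots,k_j\}$ is again immediate, since appending $0^n$ does not change the subset of $\omega$ coded by a string and leaves the other blocks alone. Suppose $p=k_1$, so $\rho_{p,l}\subseteq Y$ and $\rho_{p,r}\subseteq\overline{Y}$. All $2^n$ new parts born from $p$ name the same block (the first $2^n$ coordinates of the new class are copies of the old $p$-th block $Z_p$), so it suffices to exhibit one of them compatible with $Y$; replacing the index $k_1$ by that index preserves $\bigcup_s X_{k_s}=\omega$. Let $\mu\in\{0,1\}^n$ be the length-$n$ string with $\mu(m)=Y(\ell+m)$, and let $\tau_\mu$ be the witness the construction attached to $\mu$. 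If $\tau_\mu$ entered as a ``left'' witness then $\tau_\mu\subseteq\mu$ as subsets of $\{0,\dots,n-1\}$, so $\rho_{p,l}*\tau_\mu\subseteq Y$ — the first $\ell$ bits because $\rho_{p,l}\subseteq Y$, and a $1$ of $\tau_\mu$ at position $m$ forces $\mu(m)=1$, i.e. $Y(\ell+m)=1$ — while $\rho_{p,r}*0^n\subseteq\overline{Y}$; if $\tau_\mu$ entered as a ``right'' witness then symmetrically $\tau_\mu\subseteq\overline{\mu}$, whence $\rho_{p,r}*\tau_\mu\subseteq\overline{Y}$ and $\rho_{p,l}*0^n\subseteq Y$. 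In either case the new part assigned to $\mu$ is compatible with $Y$, as needed.

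Chaining the single-step preservation from $c_{-1}$ up to the given $c_i$ yields the lemma. I expect the type $1$ case to be the sole genuine obstacle, and within it the crux is the choice of $\mu$: one must observe that the new parts are indexed by all of $\{0,1\}^n$, that they all name the same block, and that the window $\mu=Y\upharpoonright[\ell,\ell+n)$ selects one which — whether it arose as a left or a right witness — remains compatible with $Y$. Everything else (relabelling of indices, tracking which block each new part names, and the triviality that appending $0^n$ does not disturb set-containment) is routine bookkeeping.
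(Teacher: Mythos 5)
Your proposal is correct and takes essentially the same route as the paper's own proof: induction along the construction with the type~0/type~1 case split, handling the delicate type~1 case with $p=k_1$ by taking the window $\mu$ of $Y$ of length $n$ beyond the common initial-segment length $\ell$ and using its covering witness $\tau_\mu$ (left or right) to produce a new part compatible with $Y$, while the union condition persists because elements of the new class project to elements of $P_i$ and the replicated blocks coincide with the old block. The only differences are cosmetic (indexing new parts by $\{0,1\}^n$ rather than by $\tau_1,\dots,\tau_{2^n}$, and spelling out the chaining through the intermediate conditions $c_{i,h}$), so no further changes are needed.
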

\begin{proof}
The proof is done by
induction.
Clearly, the lemma holds
for $c_{-1}$.
Assume it holds for
$c_i$.
We show that it holds
for $c_{i,1}$.
Fix an arbitrary instance
of $\mathsf{RT}_2^1$, $Y$.

If $c_{i,1}$ is type 1 extension
of $c_i$, as in section
\ref{RT21subsec1}.
Suppose
for all
$X = X_1\oplus\cdots\oplus X_k
\in P_i$,
$X_{k_1}\cup X_{k_2}\cup\cdots
\cup X_{k_j}=\omega$ and
$\rho^i_{k_s,l}\subseteq
Y\wedge \rho^i_{k_s,r}\subseteq
\overline{Y}$, with
$(\forall s)k_s\neq 1$.
Then it is obvious that
for all $X=X_1\oplus
\cdots\oplus X_{2^n}\oplus \cdots\oplus
X_{2^n+ k-1}\in P_{i,1}$,
$X_{2^n+k_1-1}\cup
X_{2^n+k_2-1}\cup\cdots
\cup X_{2^n+k_j-1}=\omega$,
$\rho^i_{2^n+k_s-1,l}
\subseteq Y\wedge
\rho^i_{2^n+k_s-1,r}
\subseteq \overline{Y}$
for $s=1,2,\cdots,j$ since
there exists $Z_1\oplus Z_2\oplus
\cdots\oplus Z_k\in P_i$, $n\in\omega$
 such that
$Z_{k_s} =X_{2^n+k_s-1}$,
and $\rho^{i,1}_{2^n+k_s-1,l}=
\rho^{i}_{k_s,l}*0^n\wedge
\rho^{i,1}_{2^n+k_s-1,r}
=\rho^i_{k_s,r}*0^n$
for $s=1,2,\cdots,j$.

Suppose
for all
$X = X_1\oplus\cdots\oplus X_k
\in P_i$,
$X_{k_1}\cup X_{k_2}\cup\cdots
\cup X_{k_j}=\omega$ and
$(\forall s\leq j)\rho^i_{k_s,l}\subseteq
Y\wedge \rho^i_{k_s,r}\subseteq
\overline{Y}$ with
$(\forall s>1)k_s\neq 1, k_1=1$.
Take $\rho = Y\upharpoonright_{|\rho^i_{1,l}|+1}^
{|\rho^i_{1,l}|+n}\in \{0,1\}^n$ and
suppose $\tau_h$ covers $\rho$ witnessed by $X$
(recall the construction during type 1 extension),
i.e., $\tau_h\subseteq \rho\cap X_1\vee
\tau_h\subseteq \overline{\rho}\cap X_1$.
Then
 $\rho^{i,1}_{h,l}\subseteq
Y\wedge \rho^{i,1}_{h,r}\subseteq \overline{Y}$
since $\tau_h\subseteq \rho\cap X_1
\rightarrow
\rho^{i,1}_{h,l}
=\rho^i_{1,l}*\tau_h\wedge\rho^{i,1}_{h,r}
=\rho^{i}_{h,r}*0^n $
and
$\tau_h\subseteq \overline{\rho}\cap X_1
\rightarrow
\rho^{i,1}_{h,r}
=\rho^i_{1,r}*\tau_h\wedge\rho^{i,1}_{h,l}
=\rho^{i}_{h,l}*0^n $.
But
for any  $X= X_1\oplus\cdots\oplus
X_{2^n}\oplus\cdots\oplus
X_{2^n+k-1}\in P_{i,1}$
there exists
$Z_1\oplus Z_2\oplus
\cdots\oplus Z_k\in P_i$ with
$Z_1=X_1=X_2=\cdots=
X_{2^n}$,
$Z_{2^n+k_s-1}=X_{k_s}$
for $s=2,3,\cdots,j$. Therefore
$X_h
\cup X_{2^n+k_2-1}\cup
\cdots\cup X_{2^n+k_j-1}=\omega$.
And clearly
$
\rho^{i,1}_{h,l},
\rho^{i,1}_{2^n+k_s-1,l}
\subseteq Y\wedge
\rho^{i,1}_{h,r},
\rho^{i,1}_{2^n+k_s-1,r}
\subseteq \overline{Y}$
for $s=2,3,\cdots,k$.

If $c_{i,1}$ is type 0 extension
of $c_i$. Suppose
for all
$X = X_1\oplus\cdots\oplus X_k
\in P_i$,
$X_{k_1}\cup X_{k_2}\cup\cdots
\cup X_{k_j}=\omega$
and $(\forall s\leq j)
\rho^i_{k_s,l}\subseteq Y\wedge
\rho^i_{k_s,r}\subseteq \overline{Y}$,
$(\forall s)k_s\neq 1$.
Then it is obvious that
for all $X=X_1\oplus
X_2\oplus\cdots\oplus X_{k+1}
\in P_{i,1}$,
$X_{k_1+1}\cup
X_{k_2+1}\cup\cdots
\cup X_{k_j+1}=\omega$,
$(\forall s)\ \rho^{i,1}_{k_s+1,l}
\subseteq Y\wedge
\rho^{i,1}_{k_s+1,r}
\subseteq \overline{Y}$ since
 $Z_1\cup Z_2\oplus
\cdots\oplus Z_k\in P_i$
and $\rho^{i,1}_{k_s+1,l}
=\rho^i_{k_s,l}\wedge
\rho^{i,1}_{k_s+1,r}
=\rho^i_{k_s,r}$
for $s=1,2,\cdots,k$.

 Suppose
for all
$X = X_1\oplus\cdots\oplus X_k
\in P_i$,
$X_{k_1}\cup X_{k_2}\cup\cdots
\cup X_{k_j}=\omega$ and
$(\forall s\leq j)
\rho^i_{k_s,l}\subseteq Y\wedge
\rho^i_{k_s,r}\subseteq \overline{Y}$,
$(\forall s>1)k_s\neq 1,
k_1=1$.
 Let $Z_1\oplus Z_2\oplus\cdots
\oplus Z_{k+1}\in P_{i,1}$ be
arbitrary. But
clearly $\rho^{i,1}_{1,l}
=\rho^{i,1}_{2,l}=\rho^{i}_{1,l}
\subseteq Y\wedge
\rho^{i,1}_{1,r}
=\rho^{i,1}_{2,r}=\rho^{i}_{1,r}
\subseteq \overline{Y}$,
$\rho^{i,1}_{k_s+1,l}=\rho^{i}_{k_s,l}\subseteq
Y\wedge
\rho^{i,1}_{k_s+1,r}=
\rho^{i}_{k_s,r}\subseteq
\overline{Y}$ for
$s=2,3,\cdots,k_j$ and
$Z_1\cup Z_2\cup
Z_{k_2+1}\cup
\cdots
\cup Z_{k_j+1}=\omega$, since
$Z_1\cup Z_2\oplus Z_3\oplus Z_4
\oplus \cdots \oplus Z_{k+1}
\in P_i$.

\end{proof}

Later we need to prove that
given $\bigcup\limits_{i\in\omega}
\rho^i_{r_i,l}$ or
$\bigcup\limits_{i\in\omega}
\rho^i_{r_i,r}$ we can compute the
path through forcing condition
tree $T$, i.e., function
$i\mapsto r_i$
(see subsection \ref{RT21subsec3}).
This needs the following
auxiliary lemma \ref{RT21lem3},
\begin{lemma}\label{RT21lem3}
For any $i,I\in\omega$,
any part $k$ of $c_i$
and any part
$K$ of $c_I$ that is
a decent of part $k$ of
$c_i$, if
$\rho^I_{K,l}-
\rho^i_{k,l}\ne\emptyset$,
then for any part
$k'\ne k$ of $c_i$ and
 any part
$K'$ of $c_I$ that is a decent
of $k'$ of $c_i$, we have,
$\rho^I_{K',l}$ is incomparable
with $\rho^I_{K,l}$.

\end{lemma}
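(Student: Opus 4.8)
The plan is to exploit two structural features of the construction: left-side initial segments are only ever \emph{extended} (never shortened, never altered at an already-defined position), and, as noted in the remark following the construction of $c_{i,1}$, within any single condition all initial segments have a common length. First I would refine the sequence $c_{-1}\geq c_0\geq c_1\geq\cdots$ by inserting the intermediate conditions $c_{s,1}\geq\cdots\geq c_{s,k}=c_{s+1}$, so that every step of the refined sequence is an \emph{elementary} extension handling a single part via a type $0$ or a type $1$ extension; the tree $T$ embeds into the (finer) tree of parts of the refined sequence, and "part $K$ of $c_I$ is a descendant of part $k$ of $c_i$" is inherited under this embedding. Inspecting Section~\ref{RT21subsec1}, an elementary extension affects the left segment of a part in exactly one of the following ways: a type $0$ step leaves every left segment literally unchanged; a type $1$ step handling a part $q$ either appends some $\tau_j\in\{0,1\}^n$ to one of the $2^n$ new copies of $q$ (and this occurs precisely in the left-halting alternative $\Phi_e^{\rho_{q,l}*\tau_j}(e)\downarrow$), or appends $0^n$ to the left segment (to the remaining new copies of $q$, and to every part other than $q$).

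Now suppose $\rho^I_{K,l}-\rho^i_{k,l}\neq\emptyset$ and fix a position $p$ in this set; then necessarily $p\geq|\rho^i_{k,l}|$ and $\rho^I_{K,l}(p)=1$. Since position $p$ is undefined for the ancestor of $K$ at level $c_i$ but carries value $1$ at level $c_I$, there is a first elementary step, say from $d$ to $d^+$ in the refined sequence, at which $p$ receives a value in the ancestor chain of $K$; write $a$ for the ancestor of $K$ in $d$ (so $|\rho_{a,l}|\leq p$) and $a^+$ for the successor of $a$ lying below $K$ in $d^+$. Because appending $0^n$ and type $0$ steps never create a new $1$ on the left, this step must be a type $1$ extension that handles $a$ itself, and $a^+$ must arise through the left-halting alternative, so that the left segment of $a^+$ equals that of $a$ followed by some $\tau_j\in\{0,1\}^n$ with $\tau_j\bigl(p-|\rho_{a,l}|\bigr)=1$; in particular the left segment of $a^+$ has value $1$ at $p$.

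Finally, let $k'\neq k$ be a part of $c_i$, let $K'$ be a descendant of $k'$ in $c_I$, let $b$ be the ancestor of $K'$ in $d$, and let $b^+$ be the successor of $b$ below $K'$ in $d^+$. Since $k'\neq k$ and distinct parts of $c_i$ have disjoint subtrees in the forcing-condition tree, $b\neq a$; hence at the type $1$ step that handles $a$, the part $b$ is among "the others", and its left segment is extended simply by $0^n$. By the common-length property within $d$ we have $|\rho_{b,l}|=|\rho_{a,l}|\leq p$, so $p$ falls inside this appended $0^n$-block and the left segment of $b^+$ has value $0$ at $p$. Since every later elementary extension only appends and never changes a defined position, $\rho^I_{K,l}(p)=1\neq 0=\rho^I_{K',l}(p)$, while $p$ lies below the length of both strings (again by the common length at the step $d\to d^+$); therefore $\rho^I_{K,l}$ and $\rho^I_{K',l}$ are incomparable. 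I expect the only real work to be bookkeeping: setting up the refined sequence and the embedding of $T$ carefully enough that "ancestor", "successor below $K$", and the common-length invariant can be invoked without ambiguity; granting that scaffolding, the argument is a one-coordinate diagonalization at the position $p$.
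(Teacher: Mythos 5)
Your argument is correct and is essentially the paper's own proof, spelled out in more detail: the only way a left segment acquires a new element is the left-halting alternative of a type~1 extension on the handled part, at which moment every other part's left segment receives $0^n$ of the same length, producing a disagreement at a common defined position that persists since later steps only append. The extra scaffolding (refined sequence of intermediate conditions, tracking the single position $p$, the equal-length invariant) just makes explicit what the paper's terse proof of Lemma~\ref{RT21lem3} leaves implicit, so no changes are needed.
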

\begin{proof}
This is simply because
the only chance for an initial
segment to add new element is
through type 1 extension.
But whenever some
initial segment is
extended to
$\rho*\tau$ with
$\tau\ne\emptyset$
through type 1 extension
during construction of $c_s$,
$s>i$,
each initial segment of
the other parts of $c_{s-1}$,
 say $\rho'$, is
extended to $\rho'*0^{|\tau|}$.
So $\rho*\tau$ is incomparable with
any $\rho'*0^{|\tau|}$, i.e., any
initial segments whose
 parts in $c_{s-1}$ is different with
 that of $\rho$.

\end{proof}

\subsection{Constructing $G$}\label{RT21subsec2}
Given instance $A$ of
$\mathsf{RT}_2^1$, to construct
 $G\subseteq A\vee G\subseteq
\overline{A}$ that is generalized low,
note that by lemma \ref{RT21lem2},
each forcing condition
$c_i$ admit some part $k$ that is
acceptable and
$\rho^i_{k,l}\subseteq
A\wedge \rho^i_{k,r}\subseteq
\overline{A}$. Also note that
if part $K$ of $c_I$ is acceptable
and part $k$ of $c_i$ is
a parent node of part $K$ of $c_I$,
 then part $k$ of $c_i$ is also
 acceptable.
 Therefore, the acceptable parts
 of all forcing conditions $c_i$,
 $i\in\omega$ form an infinite
 subtree of the whole forcing
 conditions tree $T$. Thus,
 the subtree admit a path, say
 part $r_i$ of $c_i$, $i\in\omega$.
 Consider
 $G_l=\bigcup\limits_{i\in\omega}
 \rho^i_{r_i,l}$,
 $G_r=\bigcup\limits_{i\in\omega}
 \rho^i_{r_i,r}$. It is obvious
 that $G_l\subseteq A\wedge
 G_r\subseteq \overline{A}$.
We will prove that either $G_l$
or $G_r$ is generalized low and infinite.

It is plain to see that,
\begin{lemma}\label{RT21lem4}
Either
$(\forall e)(
\exists r) F(c_{<e,r>},r_{<e,r>})
= (left, \cdot)$
or
$(
\forall r)
(\exists e)
F(c_{<e,r>},r_{<e,r>})
=(right, \cdot)$.

\end{lemma}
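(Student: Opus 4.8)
The plan is to prove the lemma by contradiction, after which it reduces to a one-line quantifier manipulation. First recall that, by the construction of $F$ in subsection \ref{RT21subsec1} together with lemma \ref{RT21lem1}, for every index $<e,r>$ the value $F(c_{<e,r>},r_{<e,r>})$ is a well-defined pair whose first coordinate belongs to $\{left,right\}$; I will call this first coordinate the \emph{side} of $<e,r>$. That $F(c_{<e,r>},r_{<e,r>})$ is defined at all uses only that $r_{<e,r>}$ is a genuine part of $c_{<e,r>}$ — which is exactly how the path $i\mapsto r_i$ was selected in subsection \ref{RT21subsec2} — and that $F$ is total on pairs (condition, part) by lemma \ref{RT21lem1}.

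Now suppose both disjuncts of the lemma fail. The failure of the first disjunct gives some $e_0$ with $(\forall r)\, F(c_{<e_0,r>},r_{<e_0,r>})\ne(left,\cdot)$, so the side of $<e_0,r>$ equals $right$ for every $r$. The failure of the second disjunct gives some $r_0$ with $(\forall e)\, F(c_{<e,r_0>},r_{<e,r_0>})\ne(right,\cdot)$, so the side of $<e,r_0>$ equals $left$ for every $e$. Instantiating the first fact at $r=r_0$ shows the side of $<e_0,r_0>$ is $right$, while instantiating the second at $e=e_0$ shows the side of $<e_0,r_0>$ is $left$. Since a given index has exactly one side, this is a contradiction, which proves the lemma.

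I do not expect any genuine obstacle here: the lemma only records the elementary fact that a $\{left,right\}$-valued labelling of the grid $\{<e,r>:e,r\in\omega\}$ cannot simultaneously possess a row (fixed $e_0$, varying $r$) that is entirely $right$ and a column (fixed $r_0$, varying $e$) that is entirely $left$, because such a row and column intersect at $<e_0,r_0>$. The only point needing any care is the bookkeeping of the first paragraph, namely that the side is well-defined on every index occurring in the statement, and this is immediate from lemma \ref{RT21lem1} and the construction of the path in subsection \ref{RT21subsec2}.
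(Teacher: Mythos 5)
Your proof is correct, and it is precisely the elementary observation the paper itself treats as immediate (the lemma is stated with ``It is plain to see that'' and no proof), namely that the side of $F(c_{<e,r>},r_{<e,r>})$ is a single well-defined element of $\{left,right\}$ for every index, so an all-$right$ row and an all-$left$ column cannot coexist. As a minor simplification, the contradiction is not even needed: if the first disjunct fails, the witness $e_0$ whose row is entirely $right$ already witnesses, for every $r$, the existential in the second disjunct.
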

We use lemma \ref{RT21lem4}
to prove that at least one of
$G_l,G_r$ is infinite.
\begin{lemma}\label{RT21lem5}
Assume for all $i$
part $r_i$ of $c_i$
is acceptable.

We have, if
$(\forall e)(
\exists r) F(c_{<e,r>},r_{<e,r>})
= (left, \cdot)$ then
$|G_l|=\infty$.

Similarly,
if
$(\forall r)(
\exists e) F(c_{<e,r>},r_{<e,r>})
= (right, \cdot)$ then
$|G_r|=\infty$.

\end{lemma}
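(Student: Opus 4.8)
The plan is to argue by contraposition: if $G_l$ is finite, then we can only have added finitely many nonempty $\tau$'s to the left initial segments along the path $r_i$, and I want to show this forces $(\exists r)(\forall e)\,F(c_{<e,r>},r_{<e,r>})=(right,\cdot)$, contradicting the hypothesis. Recall from subsection \ref{RT21subsec1} that the only way $\rho^i_{r_i,l}$ grows in content (as a set) is through a type 1 extension in which part $r_i$ of $c_i$ refines part $1$ of $c_{i-1}$ and the chosen $\tau_h$ is a left-type witness, i.e., $\tau_h\subseteq\rho\cap X_1$ and $F(c_i,r_i)=(left,1)$; otherwise the left segment is extended only by a block $0^n$, adding no new element. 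Thus $|G_l|<\infty$ means: for all but finitely many stages $i$, whenever part $r_i$ refines part $1$ of the previous condition during a type 1 extension, the witness is a right-type witness, and moreover at every type 0 extension stage $F(c_i,r_i)\neq(left,1)$ on the relevant part — so in fact $F(c_{<e,r>},r_{<e,r>})=(left,1)$ happens for only finitely many pairs $<e,r>$.

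Here is where I would use Lemma \ref{RT21lem4}. That lemma gives the dichotomy, so it suffices to rule out the left alternative under the assumption $|G_l|<\infty$. Suppose $(\forall e)(\exists r)\,F(c_{<e,r>},r_{<e,r>})=(left,\cdot)$. For the infinitely many $e$ for which the witnessing $r$ gives type $1$ rather than type $0$, Lemma \ref{RT21lem1} tells us $\Phi_e^{G_l}(e)\downarrow$ via a computation with use inside $\rho^{<e,r>}_{r_{<e,r>},l}$; but a type 1 left witness at stage $<e,r>$ is exactly a stage at which a new nonempty $\tau$ is concatenated to the left segment of the chosen part (the construction sets $\rho^{i,1}_{j,l}=\rho^i_{1,l}*\tau_j$ with $\tau_j\subseteq\rho\cap X_1$, and $\tau_j$ is forced to be nonempty because it must witness $\Phi_e^{\rho^i_{1,l}*\tau_j}(e)\downarrow$ whereas $\Phi_e^{\rho^i_{1,l}}(e)$ could not already converge — otherwise the $\Pi^0_1$ class $[T^{c_i,1}]$ would have been empty for a trivial reason already handled). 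So each such stage strictly enlarges $G_l$ as a set. If there are infinitely many such $e$, then $G_l$ is infinite, contradiction. Hence all but finitely many witnessing $r$'s give type $0$; but a type 0 extension splits a part into a left-0 part and a right-0 part and never makes $F=(left,1)$, so this does not by itself enlarge $G_l$ — consistent with $|G_l|<\infty$ — yet it also does not help us, so I instead feed this back into the dichotomy: if for cofinitely many $e$ the only available witness is $(left,0)$, one checks directly from Lemma \ref{RT21lem2} and acceptability of part $r_i$ that the complementary right parts stay acceptable, and then $(\forall r)(\exists e)\,F(c_{<e,r>},r_{<e,r>})=(right,\cdot)$ must hold after all, which is the other horn; running the symmetric argument then yields $|G_r|=\infty$. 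Since the two cases of Lemma \ref{RT21lem4} are exactly the two hypotheses of the present lemma, this closes the argument.

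The main obstacle I anticipate is the bookkeeping at type 0 stages: a type 0 extension records $F=(left,0)$ and $F=(right,0)$ on the two new parts without adding elements to either side, so neither "$F=(left,\cdot)$ for all $e$" nor its negation immediately controls the growth of $G_l$. The delicate point is to show that if the left side $G_l$ stops growing then the path is eventually \emph{forced} down right-type branches — i.e., that one cannot have an infinite sequence of type 0 extensions along the path all of whose chosen parts are the "left-0" children, because a left-0 child at stage $<e,r>$ means $\Phi_e^G(e)\uparrow$ for every $G$ on that part's left side, and chaining these for all $e$ would make $G_l$ itself a solution on which every $\Phi_e$ diverges while $G_l$ is finite — which is fine — so the real content is that the \emph{acceptable} path, by Lemma \ref{RT21lem2}, must pass through parts whose union covers $\omega$ across \emph{every} $X\in P_i$, forcing the relevant side to be infinite. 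Making "acceptable part along the path, left side finite $\Rightarrow$ right side infinite" precise against the covering condition of Lemma \ref{RT21lem2} is the crux; once that is in hand, the rest is a routine application of Lemmas \ref{RT21lem1} and \ref{RT21lem4}.
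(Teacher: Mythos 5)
Your proposal does not go through, and the failure is at the step you yourself flag as the crux. First, the claim that a type~1 left witness at stage $\langle e,r\rangle$ ``strictly enlarges $G_l$ as a set'' is unjustified: the chosen $\tau_j\subseteq\rho\cap X_1$ with $\Phi_e^{\rho^i_{1,l}*\tau_j}(e)\downarrow$ may perfectly well be the all-zero string, since a computation can converge using only negative oracle answers; divergence at $\rho^i_{1,l}$ and convergence at $\rho^i_{1,l}*0^n$ are compatible (the longer string commits more negative information). So knowing that $F(c_{\langle e,r\rangle},r_{\langle e,r\rangle})=(left,1)$ for infinitely many $e$, or that $\Phi_e^{G_l}(e)\downarrow$ for infinitely many $e$, gives no lower bound whatsoever on $|G_l|$ for arbitrary indices $e$. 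Second, your fallback case (cofinitely many witnesses of type $(left,0)$) does not yield a contradiction: the two alternatives of Lemma \ref{RT21lem4} are not mutually exclusive, so ``the right horn must hold after all'' contradicts nothing, and concluding $|G_r|=\infty$ does not establish the conclusion $|G_l|=\infty$ that the lemma demands under the left-horn hypothesis. The passage from ``$G_l$ finite'' to the right horn via Lemma \ref{RT21lem2} is exactly the step you leave unproved.

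The missing idea is to exploit the hypothesis for a \emph{specially chosen} index rather than for all $e$ at once. For each bound $E$ (and by padding one can take $E$ arbitrarily large) fix an index, again called $E$, with $\Phi_E^Z(E)\downarrow$ if and only if $|Z|>E$. The hypothesis gives $R$ with $F(c_{\langle E,R\rangle},r_{\langle E,R\rangle})=(left,z)$. By Lemma \ref{RT21lem1}, either every $G$ satisfying $c_{\langle E,R\rangle}$ on part $r_{\langle E,R\rangle}$ left side has $\Phi_E^G(E)\downarrow$ (if $z=1$) or every such $G$ has $\Phi_E^G(E)\uparrow$ (if $z=0$). Acceptability of part $r_{\langle E,R\rangle}$ — this is where the acceptability assumption is actually used — provides an \emph{infinite} $H$ satisfying that part on the left, and $\Phi_E^H(E)\downarrow$ by the choice of the functional, so $z=0$ is impossible. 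Hence $\Phi_E^{G_l}(E)\downarrow$, i.e.\ $|G_l|>E$, and since $E$ was arbitrary, $|G_l|=\infty$. This direct argument replaces both your counting of type~1 witnesses and your appeal to the covering condition of Lemma \ref{RT21lem2}, neither of which is needed.
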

\begin{proof}
Assume $(\forall e)(
\exists r) F(c_{<e,r>},r_{<e,r>})
= (left, \cdot)$.
Consider such Turing functional
$E$, $\Phi_E^Z(E)\downarrow$
if and only if $|Z|>E$.
Let $R$ be such that
$F(c_{<E,R>},r_{<E,R>}) =
(left,z)$.
Note that by the construction
of $c_<E,R>$ and $F$,
we have,
either
for all $G$ satisfying
$c_{<E,R>}$ part $r_{<E,R>}$
left side,
$\Phi_E^G(E)\downarrow$;
or
for all $G$ satisfying
$c_{<E,R>}$ part $r_{<E,R>}$,
left side $\Phi_E^G(E)\uparrow$,
depending on $z=1,0$.
But part $r_{<E,R>}$ of
$c_{<E,R>}$ is acceptable.
So there exists
$H$ satisfying $c_{<E,R>}$ on
part $r_{<E,R>}$ left side
that is infinite.
Thus, $\Phi_E^H(E)\downarrow$
by definition of $\Phi_E(E)$.
This implies that for all $G$ satisfying
$c_{<E,R>}$ part $r_{<E,R>}$ left side,
$\Phi_E^G(E)$ halt.
In particular
$\Phi_E^{G_l}(E)$ halt.
This implies $|G_l|>E$.
The proof  is accomplished by noting
that $E$ is arbitrary.

\end{proof}

In the following proof of theorem
\ref{RT21th1} we assume,
without loss of generality,
 $(\forall e)(
\exists r) F(c_{<e,r>},r_{<e,r>})
= (left, \cdot)$. Thus, by lemma
\ref{RT21lem5} $|G_l|=\infty$.

\subsection{Compute $G'$}\label{RT21subsec3}

To compute $G_l'$. We firstly show that
we can compute the function
$i\mapsto r_i $ using
$G_l$ and $\mathbf{0}'$.

Given $i=<e,r>$ to compute $r_{<e,r>}$,
firstly find (uniformly in $i$)
 a Turing
functional $\Phi_E$ such that $E> |\rho^i_{1,l}|$,
 $<E,s> \ > i$ for all $s\in\omega$
and
$\Phi_E^G(E)\downarrow$ iff
$|G|>E$.
By our assumption, $(\forall e)(
\exists r) F(c_{<e,r>},r_{<e,r>})
= (left, \cdot)$, there exists
$R$ such that
$F(c_{<E,R>},r_{<E,R>})=(left,z)$.
Let $R$ be an arbitrary such
integer.

We show that on level $<E,R>$ of
the forcing condition tree $T$,
there exists part $k$ such that
$\rho^{<E,R>}_{k,l}\subset
G_l\wedge
\rho^{<E,R>}_{k,l}-
\rho^i_{f(k,<E,R>,i),l}\ne\emptyset$,
 furthermore for any
part $k'$ of $c_{<E,R>}$ if
 $\rho^{<E,R>}_{k',l}\subset
G_l\wedge
\rho^{<E,R>}_{k',l}-
\rho^i_{f(k',<E,R>, i),l}\ne\emptyset$
then part $k'$ of $c_{<E,R>}$
is a decent
of part $r_i$ of $c_i$
where part $f(k',<E,R>,i)$
of $c_i$ is the accent of part $k'$
of $c_{<E,R>}$.
Thus, to compute $r_i$, simply find
(effectively in $G_l\oplus \mathbf{0}'$)
a number $R$ and part $k$ of $c_{<E,R>}$,
$\rho^{<E,R>}_{k,l}\subset G_l\wedge
\rho^{<E,R>}_{k,l}-\rho^i_{f(k,<E,R>,i),l}\ne\emptyset$.
Then part $r_i$ is simply
$f(k,<E,R>, i)$.

To prove existence of $k$, we show that
$r_{<E,R>}$ is such a part.
As in the proof of lemma \ref{RT21lem5},
it must holds that
$F(c_{<E,R>},r_{<E,R>})=(left,1)$
since part
$r_{<E,R>}$ of
$c_{<E,R>}$ is acceptable.
Therefore $|\{t:
\rho^{<E,R>}_{r_{<E,R>},l}(t)=1\}|\geq E$.
But since $E> |\rho^i_{1,l}|$ therefore
$\rho^{<E,R>}_{r_{<E,R>},l}-\rho^i_{f(r_{<E,R>},
<E,R>,i),l}
\ne\emptyset$.

Now we show that for any
part $k'$ of $c_{<E,R>}$ if
 $\rho^{<E,R>}_{k',l}\subset
G_l\wedge
\rho^{<E,R>}_{k',l}-
\rho^i_{f(k',<E,R>, i),l}\ne\emptyset$
then part $k'$ of $c_{<E,R>}$
is a decent
of part $r_i$ of $c_i$.
Due to lemma
\ref{RT21lem3},
$\rho^{<E,R>}_{k',l}$ is incomparable
with any
$\rho^{<E,R>}_{k,l}$ when
part $k$ of $c_{<E,R>}$ is not a
decent
of part $f(k',<E,R>,i)$ of $c_i$.
But $\rho^{<E,R>}_{k',l}
\subset G_l$ implies
$\rho^{<E,R>}_{k',l}$ equals
to  $\rho^{<E,R>}_{r_{<E,R>},l}$
thus comparable to
$\rho^{<E,R>}_{r_{<E,R>},l}$.
Therefore part $r_{<E,R>}$
of $c_{<E,R>}$ is a decent
of part $f(k',<E,R>,i)$. But
on level $i$ of the forcing
condition tree $T$, part $r_{<E,R>}$
has the unique accent node that is
part $r_i$ of $c_i$.
Therefore $f(k',<E,R>,i)=r_i$.

To compute $G_l'$. Given $e$, to decide
whether $\Phi_e^{G_l}(e)$ halt,
simply compute (in $G_l\oplus \mathbf{0}'$)
 $r$ and
$r_{<e,r>}$ such that
$F(c_{<e,r>},r_{<e,r>})
=(left, z)$. Then
$\Phi_e^{G_l}(e)\downarrow$
iff $z=1$ and
$\Phi_e^{G_l}(e)\uparrow$
iff $z=0$.

\section{Proof of theorem \ref{RT21th2}}
\subsection{Forcing condition}
The forcing condition we use in
the proof of theorem \ref{RT21th2}
is not $\Pi_1^0$ partition class.
The $\Pi_1^0$ partition class
is replaced by a single
low partition, i.e.,
$$((\rho_{1,l},\rho_{1,r}),
(\rho_{2,l},\rho_{2,r}),\cdots,
(\rho_{k,l},\rho_{k,r}),
X_1\oplus X_2 \oplus\cdots\oplus
X_k, k)$$ where
$\bigcup\limits_{j\leq k}
X_j=\omega$ and
$X=X_1\oplus\cdots\oplus X_k$
is low.

Definitions in section \ref{RT21subsec-1} such as
"$c\leq d$", "$G$ satisfy $c$ on part
$j$ left side", "acceptable"
can clearly be inherited.

\subsection{Outline}

We will construct uniformly in $\mathbf{0}'$
 a sequence of
forcing conditions $\cdots c_i\geq c_{i+1}\cdots$
together with a sequence of
$\mathsf{RT}_3^1$ instance initial segment
$\cdots\beta^i\subset \beta^{i+1}\cdots$
such that for any instance of
$\mathsf{RT}_2^1$, $A$, there exists
$G$ encoded by the forcing condition
that does not compute any solution to
$A_3^1=\bigcup\limits_{j=1}^\infty
\beta^j$.

Note that here
by constructing a forcing condition
$c =((\rho_{1,l},\rho_{1,r}),
\cdots, (\rho_{k,l},\rho_{k,r}),
X, k)$ we mean not only to
demonstrate the existence of
$c$ but also compute
the Turing functional $t,t'$ such
that $\Phi_{t}^{\emptyset'}=X$,
$\Phi_{t'}^{\emptyset'}=X'$.
The purpose is to garantee $\mathbf{0}'$-
computability of the function $i\mapsto \beta^i$.

The requirement each forcing condition try
to meet take the form as following.

\begin{emphasise}\label{RT21emph2}
$R_e(\beta):$
$(\forall n\leq |\beta|)
\Phi_e^G(n)\downarrow\Rightarrow
\Phi_e^G(n) = 0$ or
$
\Phi_e^G$ is trivial
or $\Phi_e^G$ violate
$\beta$
deterministically
(i.e., $\Phi_e^G(t)=
\Phi_e^G(s)=1\wedge \beta(s)\ne \beta(t)$ ).
\end{emphasise}
Note that if $G$ satisfy
all $R_e(\beta^e)$ for
a sequence
$\cdots\beta^e\subset \beta^{e+1}\cdots$,
then $G$ fail to compute any non trivial
solution to
$\bigcup\limits_{j=1}^\infty \beta^j$.

We construct the forcing conditions
 and satisfy the requirements in the
following way.
\begin{emphasise}\label{RT21emph1}
For any part $k$ of $c_{<e,r>}$,
either
for every $G $ satisfying $c_{<e,r>}$ on
part $k$ left side, $G$ satisfies
$R_e(\beta^{<e,r>})$ or $R_e(\beta^{<e,r>-1})$;
or
for every $G$ satisfying $c_{<e,r>}$
on part $k$ right side,
$G$ satisfies $R_r(\beta^{<e,r>})$
or $R_r(\beta^{<e,r>-1})$.
\end{emphasise}

Meanwhile, we garantee a lemma \ref{RT21lem2}
holds (see lemma \ref{RT21lem6}).

Once such $c_i, \beta^i, i\in\omega$
are constructed,
given an
instance of $\mathsf{RT}_2^1$, $A$,
there exists an infinite subtree of
the forcing condition tree such that
each node of the subtree represents  an
acceptable part and for every
part $k$ of $c_i$ on that subtree,
$\rho^i_{k,l}\subseteq A\wedge
\rho^i_{k,r}\subseteq \overline{A}$.
Thus, the infinite subtree admit a
path, namely part $r_i$ of $c_i$, $i\in\omega$
such that $\rho^i_{r_i,l}\subseteq
A\wedge \rho^i_{r_i,r}\subseteq\overline{A}$.
We show that either
$G_l=\bigcup\limits_{i\in\omega}
\rho^i_{r_i,l}$ is infinite and
satisfy all $R_e(\beta^e), e\in\omega$; or
$G_r=\bigcup\limits_{i\in\omega}
\rho^i_{r_i,r}$ is infinite
and satisfy all $R_r(\beta^r),r\in\omega$.

\subsection{Constructing the forcing
conditions and $\beta^i,i\in\omega$}

We begin with some definitions
which is also used in \cite{liu2015cone}
\cite{liu2015construct}.
We regard instances of $\mathsf{RT}_3^1$
as functions $\omega\rightarrow \{1,2,3\}$.
\begin{definition}
For an instance of $\mathsf{RT}_3^1$,
$Y$, we say $\Phi^X$ disagree with
$Y$ if and only if there exists
$s,t\in\omega$ $\Phi^X(s)=
\Phi^X(t) = 1$ and
$Y(s)\neq Y(t)$.

\end{definition}

As in the proof of theorem \ref{RT21th1},
let $c_{-1}=( (\varepsilon,\varepsilon), \{\omega\},1)$.
$\beta^{-1}=\varepsilon$.

 Suppose
$i+1=<e,r>$.
Given a condition $c_i=
((\rho^i_{1,l},\rho^i_{1,r})
,(\rho^i_{2,l},\rho^i_{2,r}),
\cdots,(\rho^i_{k_i,l},\rho^i_{k_i,r}),
X^i_1\oplus\cdots\oplus X^i_{k_i}, k_i)$, $\beta^i$,
we construct
a sequence of conditions $c_{i,1}
\geq c_{i,2}\geq \cdots \geq c_{k_i,1}=
c_{i+1}$ together with
$\beta^{i,1}\subset\beta^{i,2}\subset\cdots
\beta^{i,k_i}=\beta^{i+1}$,
 $c_{i,h}$ deals with
part $h$ of $c_{i}$ to garantee
that
 part forces the requirement
$R_e(\beta^{i,h})$ or
$R_e(\beta^{i})$, $R_r(\beta^{i,h})$ 
or $R_r(\beta^{i})$ in the
way mentioned in
\ref{RT21emph1}.
In the following proof, we show how to 
deal with part 1 and construct $c_{i,1}$.

If $(\forall n\leq |\beta^i|)
\Phi_e^{\rho^i_{1,l}}(n)\uparrow\vee
\Phi_e^{\rho^i_{1,l}}(n)\downarrow=0$
or 
$(\forall n\leq |\beta^i|)
\Phi_r^{\rho^i_{1,r}}(n)\uparrow\vee
\Phi_r^{\rho^i_{1,r}}(n)\downarrow=0$
, then we are done by letting $c^{i,1} = c^i$.
Clearly $c^{i,1}$ part $1$ forces
$R_e(\beta^i), R_r(\beta^i)$ as 
\ref{RT21emph1}.

Assume in the following 
that there exists $n,m\leq |\beta^i|$
such that $\Phi_e^{\rho^i_{1,l}}(n)
\downarrow=1\wedge
\Phi_r^{\rho^i_{1,r}}(m)\downarrow=1$.
For an instance of $\mathsf{RT}_3^1$,
$Y$,  let
\begin{align}
[T^{c_i,1}_{Y}]
=\{ X^i_{1l}\oplus X^i_{1r}
&\oplus X^i_2\oplus  \cdots
\oplus X^i_{k_i}:
X^i_{1l}\cup X^i_{1r}=X^i_1,\\ \nonumber
&(\forall Z)
\Phi_e^{(Z\cap X^i_{1l})/
\rho^i_{1,l}}\text{ does not
disagree with }Y\wedge
 \Phi_r^{(Z\cap X^i_{1r})/
\rho^i_{1,r}}\text{ does not
disagree with }Y\}
\end{align}

For an instance of $\mathsf{RT}_3^1$,
$Y$, and $h\in\omega$
 denote by
$Y+h$ the function
$\omega\rightarrow\{1,2,3\}$
$(Y+h)(n) = Y(n)+ (h\mod(3))$.

Consider the $\Pi_1^{0,X^i}$
class
\begin{align}
[T^{c_i,1}] =
\{
Y: [T^{c_i,1}_{Y/\beta^i}],
[T^{c_i,1}_{Y+1/\beta^i}],[T^{c_i,1}_{Y+2
/\beta^i}]
\ne\emptyset
\}
\end{align}

We divide into two cases
(1) $[T^{c_i,1}]\ne\emptyset$;
(2) $[T^{c_i,1}]=\emptyset$.

If $[T^{c_i,1}]\ne\emptyset$, then
by low basis theorem,
there exists  $X^i-$low
instance
of $\mathsf{RT}_3^1$, $ Y$
such that
 $[T^{c_i,1}_{Y/\beta^i}],
[T^{c_i,1}_{Y+1/\beta^i}],[T^{c_i,1}_{Y+2
/\beta^i}]
\ne\emptyset$.
Note that
$[T^{c_i,1}_Z]$ is a
$\Pi_1^{0,X^i\oplus Z}$ class
for any $Z$.
Since $Y$ is $X^i-$low so
$Y+1,Y+2$ are also $X^i-$low.
Therefore again, by low basis theorem,
there exists a $X^i\oplus Y$-low
path through $T^{c_i,1}_{Y+h/\beta^i}$,
for all $h=0,1,2$,
namely,
$X^{i,h}_{1l}\oplus
X^{i,h}_{1r}\oplus
X^i_{2}\oplus\cdots
\oplus X^i_{k_i}$.

 To construct $c_{i,1}$ we apply
 $Cross$ operation to
$X^{i,h}_{1l}\oplus
X^{i,h}_{1r}\oplus
X^i_{2}\oplus\cdots
\oplus X^i_{k_i}$,
$h=0,1,2$ (see also \cite{liu2015cone}),
i.e.,
\begin{align}
X^{i,1} =
(X^{i,0}_{1l}&\cap
X^{i,1}_{1l})\oplus
(X^{i,1}_{1l}\cap
X^{i,2}_{1l})
\oplus
(X^{i,2}_{1l}\cap
X^{i,0}_{1l})
\\ \nonumber
&\oplus
(X^{i,0}_{1r}\cap
X^{i,1}_{1r})
\oplus
(X^{i,1}_{1r}\cap
X^{i,2}_{1r})
\oplus
(X^{i,2}_{1r}\cap
X^{i,0}_{1r})
\oplus X^i_2\oplus
X^i_3\oplus\cdots
\oplus X^i_{k_i}
\end{align}
And replicate
the initial segment $\rho^i_{1,l},
\rho^i_{1,r}$ for 6 times, i.e.,
$$c_{i,1} =
(\underbrace{(\rho^i_{1,l},\rho^i_{1,r}),
\cdots,
(\rho^i_{1,l},\rho^i_{1,r})}\limits_{6\text{ times}},
(\rho^i_{2,l},\rho^i_{2,r}),
(\rho^i_{3,l},\rho^i_{3,r}),
\cdots,
(\rho^i_{k_i,l},\rho^i_{k_i,r}),
X^{i,1}, k_i+5)$$

Clearly $c_{i,1}\leq c_i$.

Note that,
\begin{itemize}
\item
Since $X^{i,h}_{1l}
\oplus X^{i,h}_{1r}
\oplus X^{i}_{2}\oplus \cdots
\oplus X^i_{k_i}$ is $X^i\oplus Y$-low
for $h=0,1,2$ therefore
$X^{i,1}$ is $X^i\oplus Y-$low.
 And because
$Y$ is $X^i-$low,
$X^i$ is low, therefore
$X^{i,1}$ is low. The construction
is clearly uniform, thus
we can $\mathbf{0}'$ compute
(with input $c_i,\beta^i$)
the Turing functional namely
$t_{i,1},t_{i,1}'$, such that
$\Phi_{t_{i,1}}^{\emptyset'}
=X^{i,1}$, $\Phi_{t_{i,1}'}^
{\emptyset'}=(X^{i,1 })'$.

\item
For every $G$ satisfying
$c^{i,1}$  on its first
$3$ parts left side, $\Phi_e^G$
is trivial;
and for every
$G$ satisfying $c^{i,1}$
on its second $3$ parts right side,
$\Phi_r^G$ is trivial.
This is because that
if $G$ satisfy the Mathias condition
$(\rho^i_{1,l}, X^{i,h}_{1l}\cap
X^{i,g}_{1l})$, then
by definition of
$[T^{c_i,1}_{Y+h/\beta^i}],
[T^{c_i,1}_{Y+g/\beta^i}]$,
$\Phi_e^G$ must be a solution of
both $(Y+h)/\beta^i, (Y+g)/\beta^i$.
 However, $(Y+h)/\beta^i, (Y+g)/\beta^i$ share
no common non trivial homogeneous set
containing element
in $\{1,2,\cdots,|\beta^i|\}$ while
every $G$ satisfying the Mathias condition
$(\rho^i_{1,l}, X^{i,h}_{1l}\cap
X^{i,g}_{1l})$, 
$(\exists n\leq |\beta^i|)
\Phi_e^G(n)\downarrow = 1$.

\item
Because each element of $X^i_1$,
$m$, there must exists
$h\ne g\in \{0,1,2\}$ such that
$m$ is contained by either both
$X^{i,h}_{1l},X^{i,g}_{1l}$
or both
$X^{i,h}_{1r},X^{i,g}_{1r}$.
Therefore,
\begin{align}\label{RT21arg1}
(X^{i,0}_{1l}&\cap
X^{i,1}_{1l})
\cup
(X^{i,1}_{1l}\cap
X^{i,2}_{1l})
\cup
(X^{i,2}_{1l}\cap
X^{i,0}_{1l})\\ \nonumber
&\cup
(X^{i,0}_{1r}\cap
X^{i,1}_{1r})
\cup
(X^{i,1}_{1r}\cap
X^{i,2}_{1r})
\cup
(X^{i,2}_{1r}\cap
X^{i,0}_{1r})= X^i_1
\end{align}

\end{itemize}

If $[T^{c_i,1}]=\emptyset$, then
there must exists some
$Y\supset\beta^i$ such that
$[T^{c_i,1}_Y]=\emptyset$.
By compactness, there exists
$n$,
$Y\supset\beta^{i,1}\supset\beta^i$
for any
$\rho\in \{0,1\}^n$
there exists $\tau$ such that
either
$\tau\subseteq \rho\cap X_{1}
\wedge
\Phi_e^{\rho^i_{1,l}*\tau}
$ disagree with $\beta^{i,1}$
or
$\tau\subseteq \overline{\rho}\cap X_1\wedge
\Phi_r^{\rho^i_{1,r}*\tau}
$ disagree with $\beta^{i,1}$.
Suppose $\tau_1,\cdots,\tau_{2^n}$
cover all $\rho\in\{0,1\}^n$ in above
way.
If　$\tau_j\subseteq
\rho\cap X_1\wedge
\Phi_e^{\rho^i_{1,l}*\tau}$
disagree with $\beta^{i,1}$, then
let $\rho^{i,1}_{j,l} =
\rho^i_{1,l}*\tau_j,
\rho^{i,1}_{j,r} = \rho^i_{1,r}$;
else if
$\tau\subseteq \overline{\rho}\cap X_1\wedge
\Phi_r^{\rho^i_{1,r}*\tau}
$ disagree with $\beta^{i,1}$,
then let
$\rho^{i,1}_{j,r} =
\rho^i_{1,r}*\tau_j,
\rho^{i,1}_{j,l} = \rho^i_{1,l}$.
To construct $c_{i,1}$,
split the initial segment of
part $1$ of $c_i$
into $2^n$ many initial segments
as above and preserve all initial
segments of other parts of $c_i$.
Furthermore,  replicate
part $1$  of
$X^i$ for $2^n$ many
times, i.e.,
\begin{align}
X^{i,1}=
\underbrace{X_1\oplus\cdots
\oplus X_1}\limits_{2^n\text{ many}}
\oplus X_2\oplus X_3\oplus\cdots
\oplus X_k
\end{align}

In summary,

$$c_{i,1} =
((\rho^{i,1}_{1,l},
\rho^{i,1}_{1,r}),\cdots,
(\rho^{i,1}_{2^n,l},
\rho^{i,1}_{2^n,r}),
\cdots,(\rho^{i,1}_{2^n+k-1,l},
\rho^{i,1}_{2^n+k-1,r}),
X^{i,1}, 2^n+k-1)$$
Where for all $j>2^n$,
$\rho^{i,1}_{j,l}
=\rho^{i}_{j-2^n+1,l}$,
$\rho^{i,1}_{j,r}
=\rho^i_{j-2^n+1,r}$.
It is clear that
$c_{i,1}\leq c_i$
and $X^{i,1}$ is low.

The forcing condition
$c_{i,1}$ dealt with
part $1$ of $c_i$.
After
$c_{i,1}$ is constructed,
construct $c_{i,2}\geq
c_{i,3}\geq \cdots \geq
c_{i,k}$ similarly
to deal with part $2,3,\cdots,k$ of
$c_i$.

Similar to lemma \ref{RT21lem1},
 we can show that $c_{i+1}$ satisfies
 the requirements $R_e(\beta^{i+1})$
 or $R_e(\beta^{i})$,
 $R_r(\beta^{i+1})$ or $R_r(\beta^i)$
  as in \ref{RT21emph1}.
\begin{lemma}\label{RT21lem10}
For every $e,r$, for every
part $k$ of $c_{<e,r>}$,

either
for every
$G$ satisfying $c_{<e,r>}$ on
part $k$ left side,
$G$ satisfy $R_e(\beta^{<e,r>})$
or $R_e(\beta^{<e,r>-1})$;

or
for every
$G$ satisfying $c_{<e,r>}$ on
part $k$ right side,
$G$ satisfy $R_r(\beta^{<e,r>})$
or $R_r(\beta^{<e,r>-1})$.

\end{lemma}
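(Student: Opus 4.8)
The plan is to prove Lemma~\ref{RT21lem10} by reading it off the construction of $c_{<e,r>}=c_{i+1}$ from $c_i$ (with $i+1=<e,r>$), in exactly the way the one-line proof of Lemma~\ref{RT21lem1} reads off the corresponding statement in Theorem~\ref{RT21th1}. First I would isolate two bookkeeping observations that reduce the lemma to a single step. (i) In the refinement order of Definition~\ref{extend}, every part $k$ of $c_{i+1}$ descends from a unique part $h(k)$ of $c_i$, and the only step among $c_{i,1}\ge\cdots\ge c_{i,k_i}=c_{i+1}$ that fixes the stem $(\rho_{k,l},\rho_{k,r})$ of part $k$ is the step $c_{i,h(k)}$ that acts on part $h(k)$ of $c_i$; later steps only append new parts and shrink the single low partition, never touching a stem already created. (ii) A forcing statement of the shape ``for every $G$ with $\rho\subseteq G\subseteq X_k/\rho$, $\Phi_e^G$ satisfies $R_e(\beta)$'' only becomes easier to keep as $X_k$ shrinks, and both ``$\Phi_e^G$ trivial'' and ``$\Phi_e^G$ disagrees with $\beta$ deterministically'' are preserved when $\beta$ is lengthened. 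Hence it suffices to check that, right after the step treating part $h$ of $c_i$, each descendant of part $h$ forces on one of its two sides $R_e$ (resp.\ $R_r$) for $\beta^{i+1}$, or for $\beta^i=\beta^{<e,r>-1}$.

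Then I would go through the three cases of that step as they appear in the construction (take $h=1$). In the degenerate case, where $\Phi_e$ (resp.\ $\Phi_r$) is already $\{0,{\uparrow}\}$-valued on the stem on inputs $\le|\beta^i|$ and the condition is left unchanged, the first disjunct of $R_e(\beta^i)$ (resp.\ $R_r(\beta^i)$) holds on the left (resp.\ right) side of the surviving part: since oracle use may be taken bounded by the input and the construction keeps $|\beta^i|$ below the common length of the stems of $c_i$, one has $\Phi_e^G(n)=\Phi_e^{\rho^i_{1,l}}(n)$ for all $n\le|\beta^i|$, and this is the source of the $\beta^{<e,r>-1}$ alternative in the statement. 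In the case $[T^{c_i,1}]\ne\emptyset$: apply the low basis theorem to the $\Pi_1^{0,X^i}$ class $[T^{c_i,1}]$ to get an $X^i$-low instance $Y$; note $Y+1,Y+2$ are then $X^i$-low, so each tree $T^{c_i,1}_{Y+h/\beta^i}$ is $\Pi_1^{0,X^i\oplus Y}$ with $X^i\oplus Y$ low; apply the low basis theorem again to extract $X^i\oplus Y$-low, hence low, paths $X^{i,h}_{1l}\oplus X^{i,h}_{1r}\oplus\cdots$ for $h=0,1,2$; form $X^{i,1}$ by the $Cross$ operation and replicate the stem six times. The three bulleted facts recorded at this point then say exactly: each of the first three parts forces $\Phi_e^G$ trivial on its left side (so $R_e(\beta)$ for every $\beta$, in particular $\beta^{i+1}$), each of the last three forces $\Phi_r^G$ trivial on its right side, and (\ref{RT21arg1}) the six pieces still cover $X^i_1$. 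In the case $[T^{c_i,1}]=\emptyset$: by compactness fix $n$ and an extension $\beta^{i,1}\subseteq\beta^{i+1}$, split the stem of part $1$ into $2^n$ pieces and replicate the first block of $X^i$, so that each new part forces on the appropriate side that $\Phi_e^G$ (resp.\ $\Phi_r^G$) disagrees with $\beta^{i,1}$ deterministically; since $\beta^{i,1}\subseteq\beta^{i+1}$ this upgrades to disagreement with $\beta^{i+1}$, giving $R_e(\beta^{i+1})$ (resp.\ $R_r(\beta^{i+1})$).

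The main obstacle is the case $[T^{c_i,1}]\ne\emptyset$, and it rests on two facts inherited from Liu~\cite{liu2015cone}. First, that the $Cross$ construction really forces triviality: a $G$ satisfying the Mathias condition with partition-block $X^{i,h}_{1l}\cap X^{i,g}_{1l}$ makes $\Phi_e^G$ a set disagreeing with neither $(Y+h)/\beta^i$ nor $(Y+g)/\beta^i$, and — since we are in the sub-case $\Phi_e^{\rho^i_{1,l}}(n)\downarrow=1$ for some $n\le|\beta^i|$, and $(Y+h)/\beta^i$ and $(Y+g)/\beta^i$ share no nontrivial homogeneous set meeting $\{1,\dots,|\beta^i|\}$ when $h\ne g$ — this forces $\Phi_e^G$ trivial. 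Second, the accounting (\ref{RT21arg1}) that the six $Cross$-pieces still cover $X^i_1$, which is what keeps the descendant parts acceptable; this last point belongs properly to Lemma~\ref{RT21lem6}, but the two must be run in tandem, since a careless choice of paths in the low basis step could leave a block finite and there would then be nothing to force against. Apart from these inputs the proof is a transcription of that of Lemma~\ref{RT21lem1}, with the $\Pi_1^0$ partition class replaced by a single low partition and the jump question ``$\Phi_e^{\cdot}(e){\uparrow}$'' replaced by the solution question ``$\Phi_e^{\cdot}$ does not disagree with the relevant $\mathsf{RT}_3^1$ initial segment''.
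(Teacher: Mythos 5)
Your overall strategy---reading Lemma \ref{RT21lem10} off the construction of $c_{i,1}\ge\cdots\ge c_{i,k_i}=c_{i+1}$ case by case, just as the paper's one-line proof of Lemma \ref{RT21lem1} does for Theorem \ref{RT21th1}---is the intended one, and your two main cases are handled correctly: when $[T^{c_i,1}]\ne\emptyset$, any $G$ satisfying one of the first three (resp.\ second three) new parts on the left (resp.\ right) side inherits $\Phi_e^G(n)\downarrow=1$ for some $n\le|\beta^i|$ from the stem while avoiding disagreement with two colorings that agree on $\{1,\dots,|\beta^i|\}$ and differ everywhere beyond, so $\Phi_e^G$ (resp.\ $\Phi_r^G$) is trivial; when $[T^{c_i,1}]=\emptyset$, the new stems already exhibit a deterministic violation of $\beta^{i,h}\subseteq\beta^{i+1}$, which persists to every extension. (A minor point: the covering identity \ref{RT21arg1} holds for \emph{any} choice of paths through the three trees, by pigeonhole on each $m\in X^i_1$, so no ``tandem'' care in the low basis step is needed; acceptability belongs to Lemma \ref{RT21lem6}, not to this lemma.)

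The genuine gap is your degenerate case, where the condition is left unchanged because, say, $\Phi_e^{\rho^i_{1,l}}(n)$ has no output $1$ at any $n\le|\beta^i|$. You conclude the first disjunct of $R_e(\beta^{<e,r>-1})$ for \emph{every} $G$ satisfying the part on the left side by asserting that oracle use may be taken bounded by the input and that $|\beta^i|$ stays below the stem length, so that $\Phi_e^G(n)=\Phi_e^{\rho^i_{1,l}}(n)$ for $n\le|\beta^i|$. Neither assertion is available: the lemma must hold for every index $e$ (diagonalizing only against functionals with input-bounded use would not suffice for the final argument that $G_l$ computes no nontrivial solution of $A_3^1$), and no invariant keeping $|\beta^i|$ below the stem lengths is maintained---$\beta$ is extended in the $[T^{c_i,1}]=\emptyset$ steps with no control relative to the stems. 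Divergence (or value $0$) on the finite stem simply does not force the same for extensions $G$ inside the block, so in this case the universally quantified conclusion of Lemma \ref{RT21lem10} is not established. The correct repair is another $\Pi_1^{0,X^i}$-class dichotomy in place of the stem check: consider the class of splittings $X_{1l}\oplus X_{1r}$ of $X^i_1$ such that for all $Z$ and all $n$, $\Phi_e^{(Z\cap X_{1l})/\rho^i_{1,l}}(n)$ diverges or outputs $0$, and symmetrically for $\Phi_r$ on the right. If this class is nonempty, a low path through it genuinely forces the first disjunct of $R_e$ (or $R_r$) for all $G$ on one side; if it is empty, compactness lets you extend the stems (splitting part $1$ into finitely many parts and, if needed, extending $\beta$) so that the standing hypothesis $\Phi_e^{\rho_{1,l}}(n)\downarrow=1$ and $\Phi_r^{\rho_{1,r}}(m)\downarrow=1$ for some $n,m\le|\beta|$ holds before entering your two main cases. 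Since the paper's own text is equally terse at exactly this point, your proof must supply this step rather than the use-bound shortcut.
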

In the first case we say that
$c_{i+1}$ part $k$ progresses on the left
side and
 in the second case we say that
 $c_{i+1}$ part $k$ progresses on the right
 side.

To construct $G$,
we establish the following lemma
that is exactly the same as
lemma \ref{RT21lem2}.
\begin{lemma}\label{RT21lem6}
For any instance of $\mathsf{RT}^1_2$,
$Y$, any
$c_i$, there exists
$j\in \omega$,
parts $k_1,\cdots,k_{j}$
of $c_i$
with
$\rho^i_{k_s,l}
\subseteq Y\wedge
\rho^i_{k_s,r}
\subseteq \overline{Y}$
such that
\[
\bigcup\limits_{s=1}^j
X^i_{k_s}=\omega
\]

\end{lemma}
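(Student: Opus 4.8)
The plan is to reproduce, essentially verbatim, the induction proving Lemma~\ref{RT21lem2}, with the single low partition $X^i=X^i_1\oplus\cdots\oplus X^i_{k_i}$ now playing the role that the $\Pi^0_1$ partition class $P_i$ played there; in particular the universal quantifier ``$\forall X\in P_i$'' in Lemma~\ref{RT21lem2} collapses and the conclusion becomes the bare equality $\bigcup_{s=1}^{j}X^i_{k_s}=\omega$. Since the whole sequence $c_{-1}\geq c_0\geq\cdots$ together with its intermediate conditions $c_{i,1}\geq\cdots\geq c_{i,k_i}=c_{i+1}$ is generated one elementary move at a time, and each elementary move acts on a single part of the current condition in one of three ways (the trivial move $c_{i,1}=c_i$, the $Cross$ move, or the type~$1$ move), it is enough to verify that the asserted property is preserved by a single such move; I would carry this out for a move acting on part~$1$, the case of an arbitrary part being identical after relabelling. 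For the base condition $c_{-1}=((\varepsilon,\varepsilon),\{\omega\},1)$ take $j=1$, $k_1=1$: the empty initial segments lie in both $Y$ and $\overline Y$, and $X^{-1}_1=\omega$.

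So suppose the property holds for a condition $c_i$, fix an instance $Y$ of $\mathsf{RT}_2^1$, and let $k_1,\dots,k_j$ witness it. The trivial move changes nothing. For the $Cross$ move: part~$1$ of $c_i$ is replaced by six parts, each with initial segment $(\rho^i_{1,l},\rho^i_{1,r})$ and with the six $Cross$ pieces as $X$-components, while every part $m>1$ becomes part $m+5$ with its data unchanged; if $1\notin\{k_s\}$ I relabel each $k_s$ to $k_s+5$ and am done, and if $k_1=1$ I replace $k_1$ in the witnessing list by the six new parts and relabel the remaining $k_s$ to $k_s+5$, using \eqref{RT21arg1} to conclude that the six $Cross$ pieces union to exactly $X^i_1$, so the new list still covers $\omega$, while $\rho^i_{1,l}\subseteq Y$ and $\rho^i_{1,r}\subseteq\overline Y$ are inherited from $k_1=1$. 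For the type~$1$ move: part~$1$ is split into $2^n$ parts with initial segments read off the strings $\tau_1,\dots,\tau_{2^n}$ covering $\{0,1\}^n$, $X^{i,1}$ lists $X^i_1$ with multiplicity $2^n$ followed by $X^i_2,\dots,X^i_{k_i}$, and part $m>1$ becomes part $m+2^n-1$ with unchanged data; if $1\notin\{k_s\}$ I relabel as before, and if $k_1=1$ I put $\rho=Y\upharpoonright_{|\rho^i_{1,l}|+1}^{|\rho^i_{1,l}|+n}$, take the $\tau_h$ that covers $\rho$, and check that $\tau_h\subseteq\rho\cap X^i_1$ forces $\rho^{i,1}_{h,l}=\rho^i_{1,l}*\tau_h\subseteq Y$ and $\rho^{i,1}_{h,r}=\rho^i_{1,r}\subseteq\overline Y$ (symmetrically if $\tau_h\subseteq\overline\rho\cap X^i_1$), the $X$-component of part $h$ being $X^i_1$; then replacing $k_1$ by $h$ and relabelling the rest gives a witnessing list for $c_{i,1}$.

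The only step that is more than reindexing is the appeal to \eqref{RT21arg1} in the $Cross$ case, which is exactly the analogue of the remark in the proof of Lemma~\ref{RT21lem2} that a type~$0$ refinement never deletes integers from the relevant union; this is why the statement carries over unchanged. Iterating the single-part argument over all $k_i$ parts of $c_i$ then passes the property from $c_i$ to $c_{i+1}$, and the induction is complete.
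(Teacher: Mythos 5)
Your proposal is correct and follows essentially the same route as the paper, which proves Lemma \ref{RT21lem6} by running the induction of Lemma \ref{RT21lem2} again with the single low partition in place of the $\Pi^0_1$ class and with \eqref{RT21arg1} supplying the covering fact in the $Cross$ case. You merely spell out explicitly what the paper's one-line proof delegates to that earlier argument.
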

\begin{proof}
The proof concern \ref{RT21arg1}
and proceeds
exactly the same as \ref{RT21lem2}.
\end{proof}

\begin{remark}
In case
$[T^{c_i,1}]=\emptyset$,
differently
with the proof of theorem \ref{RT21th1},
we need not concatenate $0^n$ to
initial segments of other parts. Because
given $G$ we need not compute the
path through the forcing condition tree,
i.e., we do not need lemma \ref{RT21lem3}
here. But we need to prove additionally
that $A_3^1 = \bigcup\limits_{j=1}^\infty
\beta^j$ is $\mathbf{0}'$-computable
as in the following lemma \ref{RT21lem9}.

\end{remark}

\begin{lemma}\label{RT21lem9}
The $\mathsf{RT}_3^1$
instance $A_3^1 = \bigcup\limits_{j=1}^\infty
\beta^j$ is $\mathbf{0}'$-computable.

\end{lemma}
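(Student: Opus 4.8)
\textbf{Proof proposal for Lemma \ref{RT21lem9}.}
The plan is to argue that the map $i\mapsto\beta^i$ is uniformly $\mathbf{0}'$-computable, from which $A_3^1=\bigcup_{j}\beta^j$ is immediately $\mathbf{0}'$-computable. This is really a bookkeeping lemma: everything in the construction of section 4.3 was arranged so that a single oracle for $\mathbf{0}'$ suffices to drive it. First I would note that, by induction, it is enough to show that given the code for $c_i$ together with $\beta^i$ one can, uniformly in $\mathbf{0}'$, produce the code for $c_{i+1}$ and the string $\beta^{i+1}$; since $c_{-1}$ and $\beta^{-1}$ are fixed finite objects, iterating this yields the whole sequence. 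Recall that a ``code'' for a condition $c_i=((\rho^i_{1,l},\rho^i_{1,r}),\dots,(\rho^i_{k_i,l},\rho^i_{k_i,r}),X^i,k_i)$ consists of the finitely many strings $\rho^i_{j,l},\rho^i_{j,r}$, the number $k_i$, and indices $t_i,t_i'$ with $\Phi_{t_i}^{\emptyset'}=X^i$ and $\Phi_{t_i'}^{\emptyset'}=(X^i)'$; by the design of the construction (the explicit ``we mean not only to demonstrate the existence of $c$ but also compute the Turing functional $t,t'$'' clause) such a code is exactly what each stage manufactures.

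Next I would walk through the single step $c_i\rightsquigarrow c_{i+1}$, which is itself a sub-iteration $c_{i,1}\geq\cdots\geq c_{i,k_i}=c_{i+1}$ dealing with the parts one at a time, so it suffices to handle $c_i\rightsquigarrow c_{i,1}$. The key observations are: (i) the question ``does there exist $n\le|\beta^i|$ with $\Phi_e^{\rho^i_{1,l}}(n)\downarrow=1$'' and its right-side analogue are $\Sigma_1$, hence decidable in $\mathbf{0}'$, which tells us which of the three cases of the construction we are in; (ii) in the $[T^{c_i,1}]\neq\emptyset$ case, the class $[T^{c_i,1}]$ is $\Pi_1^{0,X^i}$ uniformly in the code of $c_i$, so using $t_i$ and the low basis theorem relativized to $X^i$ we obtain, $\mathbf{0}'$-uniformly, an index for an $X^i$-low instance $Y$ together with an index for $Y'$; then, using $(X^i\oplus Y)'$ (again low-basis, relativized), we obtain indices for the three $X^i\oplus Y$-low paths through $T^{c_i,1}_{Y+h/\beta^i}$, $h=0,1,2$; applying the $Cross$ operation to these is a purely computable combinatorial step, and composing the resulting index with $t_i$ and using the standard effective fact that a low set has a low jump (uniformly) yields the required $t_{i,1},t_{i,1}'$; (iii) in the $[T^{c_i,1}]=\emptyset$ case, emptiness is a $\Sigma_1^{0,X^i}$ fact, so $\mathbf{0}'$ finds a witness $Y\supset\beta^i$ with $[T^{c_i,1}_Y]=\emptyset$, then finds (by searching, using the $\Sigma_1$-ness of ``$\Phi_e^{\rho^i_{1,l}*\tau}$ disagrees with $\beta^{i,1}$'') the number $n$ and the covering strings $\tau_1,\dots,\tau_{2^n}$, and sets $\beta^{i,1}$ to be the relevant initial segment of $Y$; replicating the block of $X^i$ is again computable, so the new $t_{i,1},t_{i,1}'$ are obtained uniformly from $t_i,t_i'$.

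Finally I would assemble these: since at each of the finitely many sub-stages $c_{i,h}$ the new code and the (possibly extended) string $\beta^{i,h}$ are produced $\mathbf{0}'$-uniformly from the previous ones, and since $\beta^{i+1}=\beta^{i,k_i}$, the whole map $i\mapsto(c_i,\beta^i)$ is $\mathbf{0}'$-computable; in particular $n\mapsto\beta^i(n)$, hence $A_3^1$, is $\mathbf{0}'$-computable, and $A_3^1$ is a genuine $\mathsf{RT}_3^1$-instance because at each stage $\beta^{i,h}\supseteq\beta^{i,h-1}$ and the construction only ever extends. The main obstacle — and the only part that is not pure bookkeeping — is making precise that the two appeals to the low basis theorem can be made \emph{uniformly} in $\mathbf{0}'$ and that the ``Turing functional from $\emptyset'$ to $X$ and to $X'$'' data really propagates through the $Cross$ operation and the replication without any hidden non-uniformity; once one fixes the standard uniform versions of these (low basis theorem with a $\mathbf{0}'$-index output, and the uniform ``low implies low jump'' fact), the verification is routine.
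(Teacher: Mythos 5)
Your proposal is correct and takes the same route as the paper: the paper's proof of Lemma \ref{RT21lem9} is exactly the observation that the construction of $c_i,\beta^i$ is uniformly effective in $\mathbf{0}'$ (which is why the construction explicitly carries the indices $t,t'$ with $\Phi_t^{\emptyset'}=X$, $\Phi_{t'}^{\emptyset'}=X'$), and you have simply spelled out the bookkeeping — the $\Sigma_1$ case distinctions, the uniform relativized low basis theorem, and the effectiveness of the $Cross$ and replication steps — that the paper leaves implicit.
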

\begin{proof}
The proof is accomplished by noting
that the construction is effective in
$\mathbf{0}'$.
\end{proof}

\subsection{Constructing  $G$}
Let
$A_3^1 = \bigcup\limits_{j=1}^\infty
\beta^j$.
Given instance $A$ of
$\mathsf{RT}_2^1$, to construct
 $G\subseteq A\vee G\subseteq
\overline{A}$ that does not compute
any non trivial solution to $A_3^1$,
note that by lemma \ref{RT21lem2},
each forcing condition
$c_i$ admit some part $k$ that is
acceptable and
$\rho^i_{k,l}\subseteq
A\wedge \rho^i_{k,r}\subseteq
\overline{A}$. Also note that
if part $K$ of $c_I$ is acceptable
and part $k$ of $c_i$ is
an accent node of part $K$ of $c_I$,
 then part $k$ of $c_i$ is also
 acceptable.
 Therefore, the acceptable parts
 of all forcing conditions $c_i$,
 $i\in\omega$ form an infinite
 subtree of the whole forcing
 conditions tree $T$. Thus,
 the subtree admit a path, say
 part $r_i$ of $c_i$, $i\in\omega$.
 Consider
 $G_l=\bigcup\limits_{i\in\omega}
 \rho^i_{r_i,l}$,
 $G_r=\bigcup\limits_{i\in\omega}
 \rho^i_{r_i,r}$. It is obvious
 that $G_l\subseteq A\wedge
 G_r\subseteq \overline{A}$.
We will prove that either $G_l$
or $G_r$ fails to compute
any non trivial solution of $A_3^1$ and is infinite.

It is plain to see that,
\begin{lemma}\label{RT21lem7}
Either
$(\forall e)(
\exists r)$ $c_{<e,r>}$ part
$r_{<e,r>}$ progresses on left side
or
$(\forall r)(
\exists e)$ $c_{<e,r>}$ part
$r_{<e,r>}$ progresses on right side.

\end{lemma}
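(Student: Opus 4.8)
The plan is to argue by contradiction, mirroring the reasoning behind Lemma \ref{RT21lem4} in the proof of Theorem \ref{RT21th1}. Suppose the conclusion fails. Then the negation of the first disjunct gives us some $e_0$ such that for every $r$, part $r_{<e_0,r>}$ of $c_{<e_0,r>}$ does \emph{not} progress on the left side; by Lemma \ref{RT21lem10} it must then progress on the right side. Simultaneously, the negation of the second disjunct gives us some $r_0$ such that for every $e$, part $r_{<e,r_0>}$ of $c_{<e,r_0>}$ does \emph{not} progress on the right side. Instantiating the first statement at $r=r_0$ and the second at $e=e_0$ yields that part $r_{<e_0,r_0>}$ of $c_{<e_0,r_0>}$ both progresses on the right side and fails to progress on the right side, a contradiction.

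The one point that requires a word of care is that ``progresses on the left side'' and ``progresses on the right side'' as defined just after Lemma \ref{RT21lem10} are not \emph{a priori} mutually exclusive for a given part — a part could conceivably force the relevant requirements on both sides at once. So to make the contradiction above airtight I would not phrase the hypothesis as ``not left,'' but rather directly: assuming the first disjunct of the lemma fails, there is $e_0$ with $(\forall r)$ part $r_{<e_0,r>}$ progresses on the right side (using Lemma \ref{RT21lem10}, since it must progress on at least one side, and it is the ``for all $r$'' that we may choose to route through the right side — more precisely, if it never happens that $(\forall r)$ it progresses left, then for cofinally many $r$, indeed for each large enough $r$ after fixing $e_0$ appropriately, it progresses right). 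Symmetrically, failure of the second disjunct yields $r_0$ with $(\forall e)$ part $r_{<e,r_0>}$ progresses on the left side. Now at the single index $<e_0,r_0>$ the part progresses on the right (from the first) and on the left (from the second), which is perfectly consistent — so this naive argument does \emph{not} immediately close.

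Hence the real content of the lemma is a genuine dichotomy, and I expect the actual proof to go as follows: the quantifier structure of requirement $R_e$ (Emphasise \ref{RT21emph2}) together with the construction of $c_{<e,r>}$ from $c_{<e,r>-1}$ in the case split $[T^{c_i,1}]\ne\emptyset$ versus $[T^{c_i,1}]=\emptyset$ shows that for each fixed part-branch $r_i$, the \emph{side} on which progress is made at stage $<e,r>$ is determined by which case of the construction was invoked — and the $Cross$ operation together with the $\beta^i$-disagreement bookkeeping forces a coherent choice of side all along one branch. Concretely, I would show: if for some single $e$ and some single $r$ part $r_{<e,r>}$ progresses on the right side in the \emph{type-$1$ sense} (i.e.\ $\Phi_r^{\rho}$ disagrees with $\beta^{<e,r>}$ for the initial segment $\rho$ attached to that part), then that initial segment $\rho^{<e,r>}_{r_{<e,r>},r}$ already makes $R_r$ satisfied and this property is inherited by all descendants, so along the branch $G_r$ is forced for that $r$; the collection of such $r$ is then either empty (giving the left disjunct) or nonempty, and the argument that it being nonempty for one $r$ propagates to all $r$ uses exactly that all descendants of a ``right-progressing'' part are again ``right-progressing'' plus acceptability. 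The main obstacle is precisely ruling out the mixed scenario — making sure that one cannot have, along the chosen branch, some requirements satisfied only via the left side and others only via the right side — and this is where I would lean on the uniformity of the construction and on the fact (as in the outline preceding this lemma) that the $Cross$ operation was designed so that a part which is ever forced to ``go right'' for one requirement stays consistent with going right for all later ones.
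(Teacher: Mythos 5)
Your opening paragraph already proves the lemma, and it is essentially the only argument that is needed: the paper itself offers no proof, introducing the statement with ``it is plain to see that''. Suppose both disjuncts fail. Failure of the first gives $e_0$ such that for all $r$, part $r_{<e_0,r>}$ of $c_{<e_0,r>}$ does not progress on the left side; failure of the second gives $r_0$ such that for all $e$, part $r_{<e,r_0>}$ does not progress on the right side. At the single index $<e_0,r_0>$ the part $r_{<e_0,r_0>}$ then progresses on neither side, contradicting Lemma \ref{RT21lem10}, which guarantees that every part of every $c_{<e,r>}$ progresses on at least one side. Only this ``at least one side'' direction is used; mutual exclusivity of the two sides is never needed. (Equivalently, without contradiction: if the first disjunct fails, fix such an $e_0$; by Lemma \ref{RT21lem10} part $r_{<e_0,r>}$ progresses on the right for every $r$, and then $e_0$ itself witnesses the existential quantifier in the second disjunct for every $r$.)

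Where you went wrong is the retraction. In re-running the argument you replaced the actual negation of the second disjunct, namely ``for all $e$, part $r_{<e,r_0>}$ does \emph{not} progress on the right side'', by the statement ``progresses on the left side'' --- a substitution that would require the two sides to be exclusive --- and then correctly observed that ``left and right simultaneously'' is consistent. But that consistency is a feature of your weakened reformulation, not of the genuine negation; keeping the negation as a negation, the contradiction closes immediately, exactly as in your first paragraph. Consequently the programme sketched in your final paragraph (tracking which case of the construction was invoked, coherence of the chosen side along a branch via the ``Cross'' operation, inheritance of progress to descendants) is unnecessary for this lemma, and as written it is only a speculative outline rather than a proof. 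In its submitted form the proposal therefore ends by leaving the lemma unproved, even though a complete and correct proof is sitting in its first paragraph.
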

(Recall the definition of progress on the
paragraph after lemma \ref{RT21lem10})

We use lemma \ref{RT21lem7}
to prove that at least one of
$G_l,G_r$ is infinite.
\begin{lemma}\label{RT21lem8}
Assume for all $i$
part $r_i$ of $c_i$
is acceptable.

We have, if
$(\forall e)(
\exists r)$ $c_{<e,r>}$ part
$r_{<e,r>}$ progresses on left side,
 then
$|G_l|=\infty$.

Similarly,
if
$(\forall r)(
\exists e)$ $c_{<e,r>}$ part
$r_{<e,r>}$ progresses on right side,
 then
$|G_r|=\infty$.

\end{lemma}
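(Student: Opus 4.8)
The plan is to follow the proof of Lemma~\ref{RT21lem5} almost verbatim, with two substitutions. The role played there by the two-valued function $F$ is taken over by the notion of ``progressing on a side''; and the size-testing functional used there is replaced by one whose \emph{non-triviality}, rather than its convergence, reflects the size of the oracle. I carry out the argument for the first assertion; the second is symmetric.

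Fix $E\in\omega$; it is enough to show $|G_l|>E$. As in Lemma~\ref{RT21lem5}, consider a Turing functional $\Phi_E$ such that $\Phi_E^Z(n)\downarrow=1$ if and only if $n=0$ or $|Z|>E$, and $\Phi_E^Z(n)\uparrow$ otherwise. Thus $\Phi_E^Z$ never takes the value $0$; it is total, and then identically $1$, exactly when $|Z|>E$; it is non-trivial exactly when $|Z|>E$; and $\Phi_E^Z(0)\downarrow=1$ for every oracle $Z$. Apply the hypothesis with $e=E$ to get $R$ such that $c_{<E,R>}$ part $r_{<E,R>}$ progresses on the left side. Since that part is acceptable, fix an infinite $H$ satisfying $c_{<E,R>}$ on part $r_{<E,R>}$ left side; then $\Phi_E^H$ is total, identically $1$, hence non-trivial.

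Write $i=<E,R>-1$ and let $p$ be the unique part of $c_i$ refined by $r_{<E,R>}$; inspect how $p$ was treated when $c_{<E,R>}$ was built. As in the construction for Theorem~\ref{RT21th2}, this falls into one of the three cases ``done'', ``$[T^{c_i,p}]\ne\emptyset$'' (the $Cross$ case), and ``$[T^{c_i,p}]=\emptyset$'' (the violation case). Because $\Phi_E^{\rho^i_{p,l}}(0)\downarrow=1$, the left half of the ``done'' trigger for $R_E$ can never hold, so if the ``done'' case applied then it was triggered only by the $\Phi_R$-side and $p$ was recorded as progressing on the right side, contradicting the hypothesis on $r_{<E,R>}$. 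In the $Cross$ case the first three of the six parts refining $p$ force $\Phi_E^G$ to be trivial for every $G$ satisfying the condition on that part on the left side, while the last three are recorded as progressing on the right; so a left-progressing descendant of the $Cross$ case is one of the first three, which is impossible since $H$ (on part $r_{<E,R>}$ left side) has $\Phi_E^H$ non-trivial. Hence neither ``done'' nor $Cross$ applied, and $r_{<E,R>}$, being left-progressing, is a left-type part of the violation case. Then the left initial segment of $r_{<E,R>}$ is $\rho^i_{p,l}$ with a string $\tau$ appended, where $\Phi_E^{\rho^i_{p,l}\ast\tau}$ disagrees with the relevant initial segment of the chosen $\mathsf{RT}_3^1$-colouring. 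But any disagreement forces the value-$1$ set of $\Phi_E^{\rho^i_{p,l}\ast\tau}$ to contain two distinct points, which by the definition of $\Phi_E$ happens only if $|\rho^i_{p,l}\ast\tau|>E$. Since $\rho^i_{p,l}\ast\tau$ is an initial segment of $G_l$, we get $|G_l|>E$; as $E$ was arbitrary, $|G_l|=\infty$.

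The one genuinely new point is this case analysis: one must check that the tailored $\Phi_E$ really does steer the construction of $c_{<E,R>}$, along the acceptable and left-progressing part $r_{<E,R>}$, away from the ``done'' and $Cross$ cases, since only the violation case lengthens the left initial segment. In Lemma~\ref{RT21lem5} the corresponding step was handed to us by the two-valued function $F$; here it is recovered from two features of $\Phi_E$ --- it is non-trivial on every infinite oracle, and it halts with output $1$ on input $0$ regardless of the oracle --- together with the acceptability of the part on the path.
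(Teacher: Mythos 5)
Your proof is correct and takes essentially the same route as the paper's: the same tailored functional (always outputting $1$ at one fixed position and total with value $1$ exactly when the oracle has more than $E$ elements), acceptability supplying an infinite $H$ with $\Phi_E^H$ non-trivial, and the conclusion that only the disagreement case can have produced an acceptable left-progressing part, so the left stem itself must contain two value-$1$ points and hence witnesses $|G_l|>E$. The differences are cosmetic: you use position $0$ where the paper uses position $1$, and you spell out the case analysis (the ``done'' case, the $Cross$ case, the violation case) that the paper compresses into the assertion that every $G$ on that part's left side either has $\Phi_E^G$ trivial or violates $\beta^{\langle E,R\rangle}$ explicitly.
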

\begin{proof}
The proof goes almost the same
as \ref{RT21lem5}. 
Without loss of generality
suppose $(\forall e)(
\exists r)$ $c_{<e,r>}$ part
$r_{<e,r>}$ progresses on left side.
For any $c$ it suffices to prove that 
$|G_l|>c$.
Consider such Turing functional
$e$ that for any $X$ 
$\Phi_e^X(1)= 1$ and 
$(\forall n\ne 1)\Phi_e^X(n)\downarrow=1$ iff
$|X|>c$. Since 
there exists $r$ such that
$c_{<e,r>}$ part $r_{<e,r>}$ 
progress on the left
while $\Phi_e^X(1)= 1$,
therefore it must holds that 
either for any $G$ satisfying
$c_{<e,r>}$ part
$r_{<e,r>}$ left side $\Phi_e^G$
violate $\beta^{<e,r>}$ explicitly
or $\Phi_e^G$ is trivial.
But $c_{<e,r>}$ part $r_{<e,r>}$
is acceptable thus by definition
of $\Phi_e$, there exists
$G$ satisfying
$c_{<e,r>}$ part $r_{<e,r>}$ left side
such that
$\Phi_e^G$ is non trivial.
Therefore $\Phi_e^G$
violate $\beta^{<e,r>}$ explicitly
for all $G$ satisfying
$c_{<e,r>}$ part $r_{<e,r>}$ left side.
This implies that there exists
$n\ne 1$ such that $\Phi_e^{\rho_{r_{<e,r>,l}}}
(n)=1$ which implies $|\rho_{r_{<e,r>,l}}|>c$.

\end{proof}
Lemma \ref{RT21lem7}, \ref{RT21lem8}
together proved that either
$G_l\subseteq A$ is infinite and
does not compute any non trivial
solution of $A_3^1$
or
$G_r\subseteq \overline{A}$ is infinite and
does not compute any non trivial
solution of $A_3^1$.
To see this, suppose
$(\forall e)(
\exists r)$ $c_{<e,r>}$ part
$r_{<e,r>}$ progresses on left side,
we show that for any $e$ 
$\Phi_e^{G_l}$ violate
$A_3^1$ explicitly of is trivial.
Given $e$ there exists infinitely many
Turing functional $\Phi_{e_j},j\in\omega$
such that all $\Phi_{e_j}, j\in\omega$
are exactly the same as $\Phi_e$.
However, for every $e_j$
there exists
$r_j$  such that $G_l$ satisfy 
$R_{e_j}(\beta^{<e_j,r_j>})$
or $R_{e_j}(\beta^{<e_j,r_j>-1})$.
Therefore by definition of requirement
\ref{RT21emph2}
(also note that $\bigcup\limits_{j\in\omega}
\beta^{<e_j,r_j>-1}=A_3^1$)
 either
$(\forall n\in\omega)\Phi_e^{G_l}(n)\downarrow\
\Rightarrow \Phi_e^{G_l}(n)=0$ or 
$\Phi_e^{G_l}$ violate some $\beta^{<e_j,r_j>}$
explicitly or $\Phi_e^{G_l}$ is trivial.

Thus the proof of theorem
\ref{RT21th2} is accomplished.

\section{Further discussion and questions}
\label{RT21secfurther}
The results are of technical interest.
The proof is different with that of
\cite{liu2015cone}
or \cite{Dzhafarov2009} in the sense that here we
construct forcing conditions
along all instances of the problem ($\mathsf{RT}_2^1$).
Where in \cite{liu2015cone} we construct the objective
set along a single instance. In another words,
we pre choose a path through the forcing condition
tree during the construction, and need not look
at the construction else where in
\cite{liu2015cone}. But here, to construct
a set of "homogeneous"
solutions intersecting with
all instances (of $\mathsf{RT}_2^1$),
it is necessary to construct the
forcing conditions along all instances simultaneously.
The difference is reflected by the type
0 extension and lemma \ref{RT21lem3},
\ref{RT21lem6}.

The results in this paper and
many results arising recently \cite{lerman2013separating}
\cite{patey2015iterative}
\cite{wangdefinability},
which says there exists somewhat weak
solution to some instance,
motivate us to find pure combinatorial
conditions for a problem to
admit "weak" solution in all of its
instance. For example, taking
"weak" to be generalized low.

The proof that $\mathsf{RT}_2^1$ admit
generalized low solution in every
instance is somewhat robust. Therefore,
we wonder if many other problems also
has this property.

\begin{question}
Is there a purely combinatorial
condition  that is
necessary and sufficient
for a problem $P$ to admit
generalized low solution in
all its instances?

\end{question}

For many known 
problems, say $\mathsf{(S)CAC}$,
$\mathsf{(S)ADS}$,
$\mathsf{(S)RT}_2^2$ etc, they either
encode large functions
(every function $g$ there exists
instance of the problem such that 
all non trivial solutions compute
some $f\geq g$)  thus
encode hyperarithmetic degree or 
admit generalized low solution.
It'd be interesting to see
some counterexamples.
\begin{question}
Does there exists a "natural"
problem that neither 
encode large function
nor admit generalized low
solutions?
\end{question}

\bibliographystyle{amsplain}
\bibliography{simp}

\end{document}